\documentclass[12pt, reqno]{amsart}

\usepackage{amsmath}
\usepackage{amssymb}
\usepackage{amsthm}
\usepackage[enableskew]{youngtab}
\usepackage{epic}
\usepackage{eepic}
\usepackage{ecltree}
\usepackage{color,graphicx,amsthm,amscd,verbatim}

\newcommand{\note}{\medskip\noindent{\bf Note: \ }}

\newcommand{\ds}{\displaystyle}

\newtheorem{theorem}{Theorem}[section]
\newtheorem{prop}[theorem]{Proposition}
\newtheorem{lemma}[theorem]{Lemma}
\newtheorem{cor}[theorem]{Corollary}
\newtheorem{con}[theorem]{Conjecture}
\newtheorem{definition1}[theorem]{Definition}

\def\l{\lambda}
\def\m{\mu}
\def\n{\nu}
\def\a{\alpha}
\def\b{\beta}
\def\d{\delta}
\def\g{\gamma}

\begin{document}

\title[Powers of the Vandermonde determinant: recursions]{Powers of the Vandermonde determinant, Schur Functions, and recursive formulas}
\author{C. Ballantine} 
\address{Department of Mathematics and Computer Science, College of
  the Holy Cross, Worcester, MA 01610}
\email{cballant@holycross.edu}

\date{\today}
\begin{abstract}

Since every even power of the Vandermonde determinant is a symmetric polynomial, we want to understand its decomposition in terms of the basis of Schur functions. We investigate  several combinatorial properties of the coefficients in the decomposition. In particular, we give recursive formulas for the coefficient of the Schur function $s_{\m}$ in the decomposition of an even power of the Vandermonde determinant in $n + 1$ variables in terms of the coefficient of the Schur function $s_{\l}$ in the decomposition of the same even power of the Vandermonde determinant in $n$ variables if the Young diagram of $\m$ is obtained from the Young diagram of $\l$ by adding a tetris type shape to the top or to the left. An extended abstract containing the statement of the results presented here appeared in the Proceedings of FPSAC11 \cite{B}.\medskip

\noindent Mathematics Standard Classification: 05E05, 15A15 

\noindent Keywords: Vandermonde determinant, Schur functions, Quantum Hall effect

\end{abstract}

\maketitle

\section{ Introduction}
\noindent 

 In the theory of symmetric functions Vandermonde determinants are best known for the part they play in the classical definition of Schur functions. Since each even power of the Vandermonde determinant is a symmetric function, it is natural to ask for its decomposition in terms
of the basis for the ring of symmetric functions given by Schur functions \cite{S}. This decomposition has been studied extensively (see \cite{D}, \cite{FGIL}  and the references therein) in connection with its usefulness in the understanding of the (fractional) quantum Hall effect. In particular, the coefficients in the decomposition correspond precisely to the coefficients in the decomposition of the Laughlin wave function as a linear combination of (normalized) Slater determinantal wave functions. The calculation of the coefficients in the decomposition becomes computationally expensive as the size of the determinant increases. Several algorithms for the expansion of the square of the Vandermonde determinant in terms of Schur functions are available (see, for example \cite{STW}). However, a combinatorial interpretation for the coefficient of a given Schur function is still unknown. Recently, Boussicault, Luque and Tollu \cite{BLT} provided a purely numerical algorithm for computing the coefficient of a given Schur function in the decomposition without computing the other coefficients. The algorithm uses hyperdeterminants and their Laplace expansion. It was used by the authors to compute coefficients in the decomposition of even powers of the Vandermonde determinant of size up to $11$. For determinants of large size, the algorithm becomes computationally too expensive for practical purposes. In this article we present recursive combinatorial properties of some of the coefficients in the decomposition. Specifically, the coefficient of the Schur function $s_{\m}$ in the decomposition of an even power of the Vandermonde determinant in $n + 1$ variables  is computed in terms of the coefficient of the Schur function $s_{\l}$ in the decomposition of the same even power of the Vandermonde determinant in $n$ variables if the Young diagram of $\m$ is obtained from the Young diagram of $\l$ by adding a tetris type shape to the top or to the left.

In section 2 we introduce the notation and basic facts about partitions and Schur functions and their relation to the Vandermonde determinant. In section 3 we give an elementary proof of the fact that the Schur function corresponding to a partition $\l$ and that corresponding to the  reverse partition $\l^{bc}$ (as defined by \cite{D}) have the same coefficient in the decomposition of the (correct) even power of the Vandermonde determinant. In section 4 we exhibit two simple  recursion rules followed in section 5 by two new and somewhat surprising recursive formulas. In section 5 we also present a third, conjectural, formula which  has been verified for $n\leq 6$ using Maple. We prove  two  special cases of this formula.  In section 6 we use the recursive formulas of sections 4 and 5 to prove several closed formulas and recursive observations given in \cite{D}, one of the pioneering articles in using the decomposition of the square of the Vandermonde determinant in terms of Schur functions to understand the quantum Hall effect. Our results improve considerably on the observations in \cite{D}. 

\section{Notation and basic facts}

\noindent We first introduce some notation and basic facts about the Vandermonde determinant related to this problem. For details on partitions and Schur functions we refer the reader to [6, Chapter 7].

Let $n$ be a non-negative integer. A \emph{partition} of $n$ is a weakly decreasing sequence of non-negative integers, $\l := (\l_1,\l_2, \ldots, \l_{\ell})$, such that $|\l|:= \sum \l_i=n$. We write $\l \vdash n$ to mean $\l$ is a partition of $n$. The  integers $\l_i$ are called the parts of $\l$. We identify a partition with its \emph{Young diagram}, i.e. the array of left-justified squares (boxes) with $\l_1$ boxes in the first row, $\l_2$ boxes in the second row, and so on. The rows are arranged in matrix form from top to bottom. By the box in position $(i,j)$ we mean the box in the $i$-th row and $j$-th column of $\l$. The \emph{length} of $\l$, $\ell(\l)$, is the number of rows in the Young diagram or the number of non-zero parts of $\l$.
For example,\vspace{.1in}

\begin{center}\scriptsize$\yng(6,4,2,1,1)$ \end{center}
is the Young diagram for $\l = (6,4,2,1,1)$, with $\ell(\l) = 5$ and $|\l| = 14$. 

We write $\l=\langle 1^{m_1},2^{m_2} \ldots \rangle$ to mean that $\l$ has $m_i$ parts equal to $i$. 

Given a weak composition $\a = (\a_1,\a_2,\ldots, \a_n)$ of length $n$, we write $x^{\a}$
for the monomial $x^{\a_1}x^{\a_2} \cdots x^{\a_n}$. If $\l := (\l_1,\l_2, \ldots, \l_n)$ is a partition of length at most $n$ and $ \d_n = (n - 1,n - 2, \ldots ,2,1,0)$, then the skew symmetric function
$\ds a_{\l+\d_n}$ is defined as \begin{equation}a_{\l+\d_n}=\det (x_i^{\l_j+n-j})_{i,j=1}^n.\label{skewfct}\end{equation}
 If $\l=\emptyset$,
\begin{equation} a_{\d_n} = \det(x_i^{n-j})_{i,j=1}^n= \prod_{1\leq i<j\leq n}(x_i-x_j) \end{equation}is the Vandermonde determinant. We have \cite[Theorem 7.15.1]{S}
\begin{equation} \frac{a_{\l+\d_n}}{a_{\d_n}} =s_{\l}(x_1,\ldots ,x_n),	\end{equation}
where $s_{\l}(x_1, \ldots, x_n)$ is the Schur function of shape $\l$ in variables  $x_1, \ldots , $ $x_n$. Moreover, if we denote by $[x^{\l+\d_n}]a_{\d_n}f$ the coefficient of $x^{\l+\d_n}$ in $a_{\d_n}f$, then \cite[Corollary 7.15.2]{S} for any homogeneous symmetric function $f$ of degree $n$, the coefficient of $s_{\l}$ in the decomposition of $f$ is given by  
\begin{equation}\langle f, s_{\l} \rangle = [x^{\l+\d_n}]a_{\d_n}f.\end{equation} In particular, if $f = a_{\d_n}^{2k}$, then
\begin{equation}\langle a_{\d_n}^{2k}, s_{\l}\rangle = [x^{\l+\d_n}]a_{\d_n}^{2k+1}.\label{schurinvand}	\end{equation}
We will often write $c_{\l}$ for $\langle a_{\d_n}^{2k}, s_{\l}\rangle$.

The goal of this work is to investigate several combinatorial properties of the numbers (\ref{schurinvand}).

The following proposition summarizes some easy to prove properties that are frequently used in the article. 

\begin{prop} We have 
\begin{itemize}
\item[(i)] The size of $\d_n$ is given by $|\d_n| = n(n-1)/2$

\item[(ii)] The skew symmetric function $a_{\d_n}$ is a homogeneous polynomial of degree $n(n-1)/2$. 

\item[(iii)] If $\langle a_{\d_n}^{2k},s_{\l}\rangle \not =0$, then $|\l| = kn(n-1)$, $n-1\leq \ell(\l)\leq n$,
$ k(n-1) \leq \l_1 \leq 2k(n-1)$ and $\l_n \leq k(n-1)$. 

\item[(iv)] Moreover, if $\l_n = k(n-1)$ in \emph{(iii)},
then $\l = \langle(k(n-1))^n\rangle$.\end{itemize}\end{prop}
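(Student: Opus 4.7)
The plan is to handle (i) and (ii) by unpacking definitions. For (i), the entries of $\d_n=(n-1,n-2,\ldots,0)$ sum to $0+1+\cdots+(n-1)=n(n-1)/2$. For (ii), the product formula $a_{\d_n}=\prod_{1\le i<j\le n}(x_i-x_j)$ exhibits $a_{\d_n}$ as a product of $\binom{n}{2}$ homogeneous linear factors, so it is itself homogeneous of degree $n(n-1)/2$.

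For (iii) I would collect the six bounds from four independent observations. The degree equality $|\l|=kn(n-1)$ is forced because $a_{\d_n}^{2k}$ is homogeneous of degree $kn(n-1)$ by (ii), and Schur functions of different degree contribute nothing to its expansion. The upper bound $\ell(\l)\le n$ is immediate because $s_\l(x_1,\ldots,x_n)=0$ when $\ell(\l)>n$. The upper bound $\l_1\le 2k(n-1)$ comes from equation~(\ref{schurinvand}): in the product expansion $a_{\d_n}^{2k+1}=\prod_{i<j}(x_i-x_j)^{2k+1}$ each variable has degree at most $(2k+1)(n-1)$, so if $[x^{\l+\d_n}]a_{\d_n}^{2k+1}\ne 0$ then $\l_1+n-1\le(2k+1)(n-1)$. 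The bounds $\l_1\ge k(n-1)$ and $\l_n\le k(n-1)$ are pigeonhole statements given $|\l|=kn(n-1)$ and $\ell(\l)\le n$: the largest part of a partition is at least $|\l|/\ell(\l)$, and the smallest part, padded to length $n$, is at most $|\l|/n$.

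The one nontrivial step, and the main obstacle, is $\ell(\l)\ge n-1$. My approach is to specialize $x_n=0$ in the Schur expansion $a_{\d_n}^{2k}=\sum_\m c_\m s_\m$. Using the identity $a_{\d_n}(x_1,\ldots,x_{n-1},0)=(x_1\cdots x_{n-1})\,a_{\d_{n-1}}(x_1,\ldots,x_{n-1})$, the left side becomes $(x_1\cdots x_{n-1})^{2k}\,a_{\d_{n-1}}^{2k}(x_1,\ldots,x_{n-1})$, while on the right only terms with $\ell(\m)\le n-1$ survive. Expanding $a_{\d_{n-1}}^{2k}$ into Schur polynomials in $n-1$ variables and applying $(x_1\cdots x_{n-1})^{2k}s_\n=s_{\n+((2k)^{n-1})}$ rewrites the right side as a linear combination of Schur polynomials each of whose last part is at least $2k$. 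Linear independence of $\{s_\n(x_1,\ldots,x_{n-1})\}_{\ell(\n)\le n-1}$ then forces $c_\l=0$ whenever $\l_{n-1}<2k$. Since $k\ge 1$, this gives $\l_{n-1}\ge 1$, i.e., $\ell(\l)\ge n-1$, and combined with $\ell(\l)\le n$ yields $n-1\le\ell(\l)\le n$.

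Part (iv) falls out of the equality case of the pigeonhole bound used for $\l_n$: padding $\l$ to $n$ parts, if $\l_n=k(n-1)=|\l|/n$, then since $\l_1\ge\cdots\ge\l_n$ and $\sum_i\l_i=|\l|$, every part must equal $k(n-1)$, so $\l=\langle(k(n-1))^n\rangle$.
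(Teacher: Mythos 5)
Your proof is correct and complete. The paper states this proposition without proof (it is introduced as a summary of ``easy to prove properties''), so there is no argument in the text to compare against; but every step you give checks out. Parts (i), (ii), the degree count $|\lambda|=kn(n-1)$, the bound $\lambda_1\le 2k(n-1)$ via the degree of each variable in $\prod_{i<j}(x_i-x_j)^{2k+1}$, the pigeonhole bounds $\lambda_1\ge k(n-1)$ and $\lambda_n\le k(n-1)$, and the equality case (iv) are all routine and correctly handled. The one genuinely nontrivial claim is $\ell(\lambda)\ge n-1$, and your specialization argument for it is sound: setting $x_n=0$ gives $a_{\delta_n}^{2k}(x_1,\dots,x_{n-1},0)=(x_1\cdots x_{n-1})^{2k}a_{\delta_{n-1}}^{2k}$, the multiplication rule $(x_1\cdots x_{n-1})^{2k}s_{\nu}=s_{\nu+\langle(2k)^{n-1}\rangle}$ in $n-1$ variables is a direct consequence of the bialternant formula, and linear independence of the surviving Schur polynomials forces $\lambda_{n-1}\ge 2k>0$ for any contributing $\lambda$ with $\ell(\lambda)\le n-1$. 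For what it is worth, there is a shorter route to the same conclusion in the spirit of your $\lambda_1$ bound: the single factor $(x_{n-1}-x_n)^{2k+1}$ contributes combined degree exactly $2k+1$ in the variables $x_{n-1},x_n$ to every monomial of $a_{\delta_n}^{2k+1}$, so a monomial $x^{\lambda+\delta_n}$ with nonzero coefficient must satisfy $\lambda_{n-1}+1+\lambda_n\ge 2k+1$, which rules out $\ell(\lambda)\le n-2$. Either way, the proposition is established.
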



By $\bar{a}_{\d_n}$	we mean $a_{\d_n}$ with $x_i$ replaced by $x_{i+1}$ for each $i = 1,2,\ldots, n$. Thus, \begin{equation} \bar{a}_{\d_n}= \prod_{2\leq i <j \leq n+1} (x_i-x_j).	\end{equation}
By $\bar{x}^{\a}$, where $\a$ is the weak composition $\a= (\a_1,\a_2,\ldots,\a_n)$, we mean $x^{\a}$
with $x_i$ replaced by $x_{i+1}$ for each $i = 1,2,\ldots n$. Thus, \begin{equation} \bar{x}^{\a}=x_2^{\a_1}x_3^{\a_2}\cdots x_{n+1}^{\a_n} .	\end{equation}

 Given a weak composition $\a= (\a_1,\a_2,\ldots,\a_n)$ of $n$ of length at most
$n$, we denote by $c_{\a}$ the coefficient of $x^{\a}$ in $a_{\d_n}^{2k+1}$. If $\xi$ is a permutation of $\{1, 2, \ldots, n\}$, and $\xi(\a)$ is the weak composition $(\a_{\xi(1)},\a_{\xi(2)}, \ldots, \a_{\xi(n)})$,  one can easily see that
\begin{equation} c_{\a} = sgn(\xi)c_{\xi(\a)}.\label{calpha}	\end{equation} 

\section{The box-complement of a partition}

\begin{definition1} Let  $\l=(\l_1,\l_2,\ldots, \l_{\ell(\l)})$ be a partition of $kn(n-1)$ with $\ell(\l)\leq n$. The \emph{box-complement} of $\l$ is the partition of $kn(n-1)$ given by \begin{equation}\l^{bc}=(2k(n-1)-\l_n, 2k(n-1)-\l_{n-1}, \ldots,2k(n-1)-\l_1).\label{bc} \end{equation} 
\end{definition1}
\vspace{.1in}

Thus, $\l^{bc}$ is obtained from $\l$ in the following way. Place the Young diagram of $\l$ in the upper left corner of a box with $n$ rows each of length $2k(n-1)$. If we remove the Young diagram of $\l$ and rotate the remaining shape by $180^{\circ}$, we obtain the Young diagram of $\l^{bc}$. \vspace{.1in}

\noindent {\bf Example:} Let $k=1$, $n=4$ and $\l=(5,3,2,2)$. Then $\l^{bc}=(4,4,3,2)$. The Young diagram of $\l$ is shown on the left of the $4\times 6$ box. The remaining squares of the box are marked with $X$. They form the diagram of $\l^{bc}$ rotated by $180^{\circ}$. \begin{center}    \young(\hfil\hfil\hfil\hfil\hfil{X},\hfil\hfil\hfil XXX,\hfil\hfil XXXX,\hfil\hfil XXXX)\end{center}\vspace{.1in}

\begin{lemma}\label{bclemma} {\bf (Box-complement lemma)} With the notation above, we have  \begin{equation}\langle a_{\delta}^{2k},s_{\l}\rangle =\langle a_{\delta}^{2k},s_{\l^{bc}}\rangle\end{equation}

\end{lemma}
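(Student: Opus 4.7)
The plan is to work entirely at the level of coefficients of monomials in the antisymmetric polynomial $a_{\delta_n}^{2k+1}$, exploiting the two natural symmetries: the substitution $x_i \mapsto 1/x_i$ and the reversal of variables. By equation (\ref{schurinvand}), the desired identity is just
\[
c_{\lambda+\delta_n} \;=\; c_{\lambda^{bc}+\delta_n},
\]
where $c_\alpha$ denotes the coefficient of $x^\alpha$ in $a_{\delta_n}^{2k+1}$.

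First, I would perform the substitution $x_i \mapsto x_i^{-1}$ in $a_{\delta_n}$. A short computation gives
\[
a_{\delta_n}(x_1^{-1},\ldots,x_n^{-1}) \;=\; (-1)^{n(n-1)/2}\,a_{\delta_n}(x_1,\ldots,x_n)\,\prod_{i=1}^n x_i^{-(n-1)},
\]
since each factor $x_i^{-1}-x_j^{-1}=(x_j-x_i)/(x_ix_j)$ contributes a sign and a denominator. Raising to the $(2k+1)$st power and comparing coefficients of $x^{-\alpha}$ on the left with coefficients of $x^{\beta-(2k+1)(n-1)\mathbf{1}}$ on the right (with $\mathbf{1}=(1,\ldots,1)$) yields
\[
c_\alpha \;=\; (-1)^{n(n-1)/2}\,c_{\,(2k+1)(n-1)\mathbf{1}-\alpha},
\]
where I have used that $2k+1$ is odd so $(-1)^{(2k+1)n(n-1)/2}=(-1)^{n(n-1)/2}$.

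Next, I would apply this identity with $\alpha=\lambda+\delta_n$ and compute the $i$-th component of $(2k+1)(n-1)\mathbf{1}-(\lambda+\delta_n)$, obtaining $2k(n-1)+i-1-\lambda_i$. A direct check against the definition (\ref{bc}) shows this equals $(\lambda^{bc}+\delta_n)_{n+1-i}$. In other words, $(2k+1)(n-1)\mathbf{1}-(\lambda+\delta_n)=\xi(\lambda^{bc}+\delta_n)$, where $\xi$ is the reversal permutation $i\mapsto n+1-i$, whose sign is $(-1)^{n(n-1)/2}$. By the antisymmetry relation (\ref{calpha}),
\[
c_{\,(2k+1)(n-1)\mathbf{1}-(\lambda+\delta_n)} \;=\; (-1)^{n(n-1)/2}\,c_{\lambda^{bc}+\delta_n}.
\]
Combining the two displayed equations the two sign factors $(-1)^{n(n-1)/2}$ cancel, giving $c_{\lambda+\delta_n}=c_{\lambda^{bc}+\delta_n}$, as required.

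I do not anticipate a serious obstacle here; everything reduces to bookkeeping. The only step that requires any care is the index arithmetic showing that the ``inverse substitution partition'' $(2k+1)(n-1)\mathbf{1}-(\lambda+\delta_n)$ is the reversal of $\lambda^{bc}+\delta_n$ rather than $\lambda^{bc}+\delta_n$ itself, since this is what forces the sign $(-1)^{n(n-1)/2}$ from the permutation to appear and cancel the sign coming from the $x_i\mapsto x_i^{-1}$ substitution.
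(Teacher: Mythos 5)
Your proof is correct, but it is a genuinely different argument from the one in the paper. The paper proceeds by induction on $n$: it peels off the last variable, writing $a_{\delta_n}^{2k+1}=a_{\delta_{n-1}}^{2k+1}\cdot\prod_{i<n}(x_i-x_n)^{2k+1}$ and $a_{\delta_n}^{2k+1}=\bar a_{\delta_{n-1}}^{2k+1}\cdot\prod_{i>1}(x_1-x_i)^{2k+1}$, and then matches, term by term, the contributions to $x^{\lambda+\delta_n}$ in the first factorization with the contributions to $x^{\lambda^{bc}+\delta_n}$ in the second, invoking the inductive hypothesis to equate the inner coefficients. Your argument instead exploits a single global symmetry: the substitution $x_i\mapsto x_i^{-1}$ gives $c_{\alpha}=(-1)^{n(n-1)/2}c_{(2k+1)(n-1)\mathbf{1}-\alpha}$, and the index computation $(2k+1)(n-1)-(\lambda_i+n-i)=(\lambda^{bc}+\delta_n)_{n+1-i}$ together with the antisymmetry relation (\ref{calpha}) (whose reversal permutation has sign $(-1)^{n(n-1)/2}$, cancelling the first sign) finishes the proof; all steps check out. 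Your route is shorter, avoids induction entirely, and makes transparent \emph{why} the box complement appears — it is the composite of the degree-reflection symmetry of the odd Vandermonde power with variable reversal — whereas the paper's bijective induction is more elementary in the sense of manipulating only binomial expansions of the linear factors, at the cost of considerably more bookkeeping. The one point worth stating explicitly in a final write-up is that (\ref{schurinvand}) is applied to $\lambda^{bc}$ as well as to $\lambda$, which requires observing that $\lambda^{bc}$ is again a partition of $kn(n-1)$ of length at most $n$ (immediate from the definition once $\lambda_1\le 2k(n-1)$, the only case in which either coefficient can be nonzero by Proposition 2.1(iii)).
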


For a proof in the case $k = 1$, see \cite[Section 6]{D} where the box-complement partition is referred to as the reversed partition. We prove the lemma for general $k$ by elementary means, using induction on $n$. In \cite{D}, Dunne also explains the physical meaning of the box-complement lemma.

\begin{proof} We use induction on $n$. If $n = 1$, $a_{\d_1} = 1$ and the only partition $\l$
for which $\langle a_{\d_1},s_{\l}\rangle \not = 0$ is the empty partition. Its box-complement is also
the empty partition. If $n = 2$, $a_{\d_2}^{2k+1} = (x_1 - x_2)^{2k+1}$. A partition $\l$ of $2k$
with $\ell(\l) \leq 2$ for which $\langle a_{\d_2},s_{\l}\rangle \not = 0$	 is of the form $\l = (\l_1,2k-\l_1)$, with $k\leq \l_1\leq 2k$. The box-complement of $\l$ is $\l^{bc} = (2k-(2k-\l_1),2k-\l_1) = \l$. Thus, each contributing partition is its own box-complement.
\vspace{.1in}

For the induction step, assume that $\ds \langle a_{\d_{n-1}}^{2k},s_{\m}\rangle =\langle a_{\d_{n-1}}^{2k},s_{\m^{bc}}\rangle$, for all partitions $\m$ of $k(n-1)(n-2)$ with $\ell(\m)\leq n-1$. Note that (\ref{calpha}) implies that the statement of the lemma is true for all weak compositions of $k(n-1)(n-2)$,
not just for partitions.

Fix $\l=(\l_1,\l_2,\ldots, \l_{n-1},\l_n)\vdash kn(n-1)$ with $n-1 \leq \ell(\l) \leq n$ (we
allow $\l_n = 0$). We will set up a bijective correspondence between terms
in the expansion of $a_{\d_n}^{2k+1}$ which are multiples of $x^{\l+\d_n}$ and terms which are multiples of   $x^{\l^{bc}+\d_n}$.

We first write $\ds a_{\d_n}^{2k+1}$ as \begin{equation}\ds a_{\d_n}^{2k+1}=a_{\d_{n-1}}^{2k+1}\ \prod_{i=1}^{n-1}(x_i-x_n)^{2k+1}= a_{\d_{n-1}}^{2k+1} \cdot P_1.\end{equation}
The product $P_1$ is the only part of $\ds a_{\d_{n}}^{2k+1}$ contributing powers of $x_n$ to $x^{\l+\d_n}$.

Consider a weak composition \begin{equation}\a=(\a_1,\a_2, \ldots, \a_{n-1})\label{weakcomp}\end{equation}
of $(2k+1)(n-1)-\l_n$ with $0 \leq \a_i \leq \min(2k+1,\l_i+n-i)$, $i = 1,2,\ldots ,n-1$.
Suppose
\begin{equation} \label{alpha} \ds x^{\a}x_n^{\l_n}=x_1^{\a_1}x_2^{\a_2}\cdots x_{n-1}^{\a_{n-1}}x_n^{\l_n}\end{equation}
appears in $P_1$ with coefficient $p_{\a}$ and \begin{equation}\frac{x^{\l+\d_n}}{x^{\a}x_n^{\l_n}}=:x^{\n+\d_{n-1}}\end{equation} appears in $\ds a_{\d_{n-1}}^{2k+1}$ with coefficient $s_{\a}$. Here, $\n = (\n_1,\n_2,\ldots,\n_{n-1})$ is a
weak composition of $k(n - 1)(n - 2)$ with $$\n_i = \l_i-\a_i+ 1.$$
Now we write $\ds a_{\d_n}^{2k+1}$ as \begin{equation}\ds a_{\d_n}^{2k+1}= \bar{a}_{\d_{n-1}}^{2k+1} \prod_{i=2}^{n}(x_1-x_i)^{2k+1}=\bar{a}_{\d_{n-1}}^{2k+1}\cdot P_2.\end{equation} The product $P_2$ is the only part of $a_{\d_n}^{2k+1}$ contributing powers of $x_1$ to $x^{\l^{bc}+\d_n}$.
The weak composition $\a$ in (\ref{weakcomp}) uniquely determines the  weak composition $\tilde{\a}=(2k+1-\a_{n-1},2k+1-\a_{n-2}, \ldots, 2k+1-\a_{1})$. Suppose 
\begin{equation} \label{3-alpha} \ds x_1^{\l^{bc}_1+n-1}\bar{x}^{\tilde{\a}}=\end{equation}$$ x_1^{2k(n-1)-\l_n+n-1}x_2^{2k+1-\a_{n-1}}x_3^{2k+1-\a_{n-2}}\cdots x_{n-1}^{2k+1-\a_2}x_n^{2k+1-\a_1},$$
 appears in $P_2$ with coefficient $q_{\a}$ and
\begin{equation}\frac{x^{\l^{bc}+\d_n}}{x_1^{\l^{bc}_1+n-1}\bar{x}^{\tilde{\a}}}=:\bar{x}^{\eta+\d_{n-1}}	\end{equation}
 appears	in	$\bar{a}_{\d_{n-1}}^{2k+1}$ with coefficient $t_{\a}$. 	Here, $\eta=(\eta_1, \eta_2, \ldots, \eta_{n-1})$	 is a weak composition of $k(n-1)(n-2)$, with \begin{equation}\eta_i=\l^{bc}_{i+1}-\tilde{\a}_i=2k(n-1)-\l_{n-i}-(2k+1)+\a_{n-i}.
 \end{equation}
It is easily verified that $\n^{bc} = \eta$ and thus, by the inductive hypothesis, $s_{\a} = t_{\a}$. 

Now we compare the coefficients $p_{\a}$ and $q_{\a}$.  Since $ \ds \l_n=(2k+1)(n-1) - \sum_{i=1}^{n-1}\a_i$, we write  $p_{\a} \cdot  x^{\a}x_n^{\l_n}$ as \begin{equation}\label{m} \b_1x_1^{\a_1}x_n^{2k+1-\a_1}\cdot \b_2x_2^{\a_2}x_n^{2k+1-\a_2}\cdots \b_{n-1}x_{n-1}^{\a_{n-1}}x_n^{2k+1-\a_{n-1}}.\end{equation}
 Similarly, we write $q_{\a} \cdot   \ds x_1^{\l^{bc}_1+n-1}\bar{x}^{\tilde{\a}}$ as \begin{equation}\label{q}  \g_1 x_1^{\a_{n-1}}x_2^{2k+1-\a_{n-1}}\cdot \g_2x_1^{\a_{n-2}}x_3^{2k+1-\a_{n-2}}\cdots \g_{n-1}x_1^{\a_1} x_n^{2k+1-\a_1}.\end{equation}

 Each term $\b_ix_i^{\a_i}x_n^{2k+1-\a_i}$ in (\ref{m}) occurs only in the expansion of the term $(x_i-x_n)^{2k+1}$ in $P_1$. Similarly, each term $\g_ix_1^{\a_n-i}x_{i+1}^{2k+1-\a_{n-i}}$ in (\ref{q}) occurs only in the expansion of the term $(x_1-x_{i+1})^{2k+1}$ in $P_2$. Therefore, $\b_i=\g_{n-i}$ for all $i=1,2, \ldots, n-1$ and, consequently, $p_{\a}=q_{\a}$. \medskip
 
Summing up, we wrote \begin{equation}\langle a_{\d_n}^{2k},s_{\l} \rangle x^{\l+\d_n} = \sum_{\a} s_{\a}x^{\n+\d_{n-1}}\cdot p_{\a}x^{\a}x_n^{\l_n}\end{equation} and \begin{equation} \langle a_{\d_n}^{2k},s_{\l^{bc}} \rangle x^{\l^{bc}+\d_n}= \sum_{\a} t_{\a}x^{\eta+\d_{n-1}}\cdot q_{\a}x_1^{\l^{bc}_1+n-1}\bar{x}^{\tilde{\a}}.\end{equation}  
Since $p_{\a}=q_{\a}$ and $s_{\a}=t_{\a}$ for each $\a=(\a_1,\a_2, \ldots \a_{n-1})$ with $0\leq \a_i\leq \min(2k+1,\l_i+n-i)$, $i=1,2, \ldots, n-1$ and $\ds |\a|=(2k+1)(n-1)-\l_n$,   it follows that $\langle a_{\d_n}^{2k},s_{\l}\rangle =\langle a_{\d_n}^{2k},s_{\l^{bc}}\rangle$.

\end{proof}

\section{Simple recursive formulas}
\noindent The goal of this section is to establish some preliminary recursive formulas for $\langle a_{\d_{n+1}}^{2k},s_{\m}\rangle$  in 
terms of $\langle a_{\d_n}^{2k},s_{\l}\rangle$  when the diagram of $\m$ is obtained from the diagram of $\l$ by adding a certain 
configuration of boxes, called a \emph{tetris type shape}, to the top or to the left. For $k=1$ these results have been mentioned in \cite{STW}. 
\begin{theorem} \label{row} If $\l=(\l_1, \l_2, \ldots, \l_n)$ is a partition of $kn(n-1)$ with $\ell(\l)\leq n$ and $\m$ is the partition of $kn(n+1)$ given by $\m = (2kn,\l_1, \l_2, \ldots, \l_n)$, then
\begin{equation}\langle a_{\d_{n+1}}^{2k}, s_{\m}\rangle =\langle a_{\d_{n}}^{2k}, s_{\l}\rangle. \end{equation}\end{theorem}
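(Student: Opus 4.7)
The plan is to use the coefficient-extraction identity (\ref{schurinvand}) together with the factorization of $a_{\delta_{n+1}}^{2k+1}$ used in the proof of the box-complement lemma. Specifically, I would write
\begin{equation*}
a_{\delta_{n+1}}^{2k+1} = \bar{a}_{\delta_n}^{2k+1} \cdot P_2, \qquad P_2 = \prod_{i=2}^{n+1}(x_1-x_i)^{2k+1},
\end{equation*}
so that $P_2$ is the only factor contributing positive powers of $x_1$.

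Next I would compute the exponent vector. Since $\delta_{n+1}=(n,n-1,\ldots,1,0)$ and $\mu=(2kn,\lambda_1,\ldots,\lambda_n)$, the target monomial is
\begin{equation*}
x^{\mu+\delta_{n+1}} = x_1^{(2k+1)n}\, x_2^{\lambda_1+n-1}\, x_3^{\lambda_2+n-2}\cdots x_{n+1}^{\lambda_n}.
\end{equation*}
The crucial observation is that the total degree of $P_2$ in the single variable $x_1$ is at most $(2k+1)n$, and this maximum is attained in exactly one way: by picking $x_1^{2k+1}$ (with coefficient $1$) out of each of the $n$ factors $(x_1-x_i)^{2k+1}$. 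Thus $P_2$ contributes the monomial $x_1^{(2k+1)n}$ with coefficient $1$ and nothing else of this $x_1$-degree.

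Consequently, to extract $[x^{\mu+\delta_{n+1}}]\,a_{\delta_{n+1}}^{2k+1}$, I only need to extract the coefficient of $x_2^{\lambda_1+n-1}x_3^{\lambda_2+n-2}\cdots x_{n+1}^{\lambda_n} = \bar{x}^{\lambda+\delta_n}$ from $\bar{a}_{\delta_n}^{2k+1}$. After the variable relabeling $x_{i+1}\mapsto x_i$, this is exactly $[x^{\lambda+\delta_n}]\,a_{\delta_n}^{2k+1} = \langle a_{\delta_n}^{2k},s_\lambda\rangle$, and applying (\ref{schurinvand}) once more on the left-hand side finishes the proof.

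There is no serious obstacle here; the whole argument is driven by the degree-counting observation that $\mu_1 + n = (2k+1)n$ coincides with the maximum $x_1$-degree of $a_{\delta_{n+1}}^{2k+1}$, which pins down the contribution from $P_2$ uniquely and reduces everything to the $n$-variable statement. The only thing that needs care is confirming that no cross-terms in $P_2$ (of the form $x_1^{a}x_i^{2k+1-a}$ with $a<2k+1$) can contribute, which is immediate from the $x_1$-degree bound.
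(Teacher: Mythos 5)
Your proof is correct and follows essentially the same route as the paper, which proves the theorem from the very same factorization $a_{\d_{n+1}}^{2k+1}=\prod_{i=2}^{n+1}(x_1-x_i)^{2k+1}\cdot\bar{a}_{\d_n}^{2k+1}$ (the paper states this in one line; you have supplied the degree-counting detail that pins down the unique contribution $x_1^{(2k+1)n}$ with coefficient $1$). No gaps.
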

Thus, adding the tetris type shape

 \begin{center}
 \raisebox{-.5cm}{\large{$\stackrel{2kn}{\overbrace{\hspace{3.4cm}}}$}}\\  
 {\small \young(\hfil\hfil\hfil{\ $\cdots$ \ }\hfil\hfil)}\\ \  \end{center}
   to the top of the diagram for $\l$ does not change the coefficient. If $k=1$, we denote this tetris type shape by $T_0(n,0)$. 

\begin{proof} The proof follows by induction from
\begin{equation}a_{\d_{n+1}}^{2k+1}=\prod_{i=2}^{n+1}(x_1-x_i)^{2k+1}\cdot \bar{a}_{\d_n}^{2k+1}.\end{equation} \end{proof}
\noindent {\bf Remark:} The theorem is also true if $\l$ is just a weak composition of $kn(n-1)$ with no more than one part equal to 0.\bigskip

\noindent {\bf Note:}	The result of the theorem for $k = 1$ is also noted in ($23$a) of \cite{STW} and in \cite[Section 6]{D}. 

\begin{cor} \label{unif}  If $\l = (2k(n-1),2k(n-2),\ldots,4k,2k,0) = 2k\d_n$, then
$\langle a_{\d_n}^{2k},s_{\l}\rangle=1$.\end{cor}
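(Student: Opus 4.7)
The plan is to prove this by a short induction on $n$ using Theorem \ref{row}, recognizing that the staircase-like partition $2k\delta_n$ is obtained from $2k\delta_{n-1}$ by prepending a single row of length $2k(n-1)$, which is precisely the tetris move of that theorem.

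First I would set up the base case. For $n=1$, we have $\delta_1 = (0)$, so $a_{\delta_1} = 1$ and $2k\delta_1 = (0) = \emptyset$, the empty partition. Then $\langle a_{\delta_1}^{2k}, s_\emptyset \rangle = \langle 1, 1 \rangle = 1$, as required.

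Next, for the inductive step, assume $\langle a_{\delta_n}^{2k}, s_{2k\delta_n} \rangle = 1$. Set $\lambda = 2k\delta_n = (2k(n-1), 2k(n-2), \ldots, 2k, 0)$. Then $\lambda$ is a partition of $2k \cdot |\delta_n| = 2k \cdot n(n-1)/2 = kn(n-1)$, of length at most $n$, so the hypotheses of Theorem \ref{row} are satisfied. The partition $\mu$ produced by the theorem is
\begin{equation*}
\mu = (2kn, \lambda_1, \lambda_2, \ldots, \lambda_n) = (2kn, 2k(n-1), 2k(n-2), \ldots, 2k, 0) = 2k\delta_{n+1}.
\end{equation*}
Thus Theorem \ref{row} gives
\begin{equation*}
\langle a_{\delta_{n+1}}^{2k}, s_{2k\delta_{n+1}} \rangle = \langle a_{\delta_n}^{2k}, s_{2k\delta_n} \rangle = 1,
\end{equation*}
closing the induction.

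There is no real obstacle here: the corollary is essentially a one-line consequence of Theorem \ref{row} once one observes that $2k\delta_{n+1}$ is obtained from $2k\delta_n$ by placing the top row of length $2kn$ above it. The only thing to verify carefully is the sizes ($|\lambda| = kn(n-1)$) and the length condition ($\ell(\lambda) \leq n$), both of which are immediate from the definition of $\delta_n$.
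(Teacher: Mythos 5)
Your proof is correct and is exactly the argument the paper intends: the corollary is stated as an immediate consequence of Theorem \ref{row}, obtained by inducting on $n$ and observing that $2k\d_{n+1}$ arises from $2k\d_n$ by prepending a row of length $2kn$, with base case the empty partition. Your careful checks of $|\l|=kn(n-1)$ and $\ell(\l)\leq n$ are exactly the right details to verify.
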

For the physical interpretation, when $k = 1$, the partition $\l$ in the corollary corresponds to the most evenly distributed of the Slater states (every third single particle angular momentum is filled) \cite{D}.\medskip

Using Theorem \ref{row} and Lemma \ref{bclemma}, we obtain the following corollary.
\begin{cor}\label{leftblock}  If $\l=(\l_1, \l_2, \ldots, \l_n)$ is a partition of $kn(n-1)$ with $\ell(\l)\leq n$ and $\m$ is the partition of $kn(n+1)$ given by $\m =\l+\langle (2k)^n\rangle= (\l_1+2k, \l_2+2k, \ldots, \l_n+2k)$, then
\begin{equation}\langle a_{\d_{n+1}}^{2k}, s_{\m}\rangle =\langle a_{\d_{n}}^{2k}, s_{\l}\rangle. \end{equation} \end{cor}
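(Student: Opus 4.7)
The plan is to reduce Corollary \ref{leftblock} to Theorem \ref{row} by conjugating with the box-complement operation. Adding a block of $2k$ columns on the left of $\l$ should, after taking box-complements, correspond to adding a single row of length $2kn$ on the top of $\l^{bc}$, which is precisely the configuration handled by Theorem \ref{row}.

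First I would compute the box-complement of $\m$ inside the $(n+1)\times 2kn$ rectangle. Writing $\m=(\l_1+2k,\l_2+2k,\ldots,\l_n+2k,0)$ with the padding zero in position $n+1$, the definition (\ref{bc}) gives
\begin{equation*}
\m^{bc}=\bigl(2kn-0,\;2kn-(\l_n+2k),\;2kn-(\l_{n-1}+2k),\;\ldots,\;2kn-(\l_1+2k)\bigr),
\end{equation*}
which simplifies to $\m^{bc}=(2kn,\;2k(n-1)-\l_n,\;\ldots,\;2k(n-1)-\l_1)=(2kn,\l^{bc}_1,\l^{bc}_2,\ldots,\l^{bc}_n)$. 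Thus $\m^{bc}$ is obtained from $\l^{bc}$ by prepending a row of length $2kn$.

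Next I would string together three applications of results already in the paper:
\begin{equation*}
\langle a_{\d_{n+1}}^{2k},s_{\m}\rangle
\;\stackrel{\text{Lemma \ref{bclemma}}}{=}\;\langle a_{\d_{n+1}}^{2k},s_{\m^{bc}}\rangle
\;\stackrel{\text{Thm \ref{row}}}{=}\;\langle a_{\d_{n}}^{2k},s_{\l^{bc}}\rangle
\;\stackrel{\text{Lemma \ref{bclemma}}}{=}\;\langle a_{\d_{n}}^{2k},s_{\l}\rangle.
\end{equation*}
The middle equality uses Theorem \ref{row} applied to $\l^{bc}$, which is a partition of $kn(n-1)$ of length at most $n$ precisely when $\l$ is, so the hypothesis of Theorem \ref{row} is satisfied.

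There is essentially no obstacle: the only substantive check is that the simple column-shift $\l\mapsto\l+\langle(2k)^n\rangle$ is genuinely conjugate under the box-complement to the row-prepending operation of Theorem \ref{row}, and this is immediate from the definition. One minor care point is the length bookkeeping—$\m$ has at most $n$ parts, but its box-complement is taken inside the $(n+1)$-row rectangle, so the leading entry $2kn$ of $\m^{bc}$ comes from complementing the zero part we pad in position $n+1$. Once this is observed, the result follows in two lines.
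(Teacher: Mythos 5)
Your proof is correct and is exactly the argument the paper intends: the corollary is stated immediately after the sentence ``Using Theorem \ref{row} and Lemma \ref{bclemma}, we obtain the following corollary,'' and your chain $\langle a_{\d_{n+1}}^{2k},s_{\m}\rangle=\langle a_{\d_{n+1}}^{2k},s_{\m^{bc}}\rangle=\langle a_{\d_{n}}^{2k},s_{\l^{bc}}\rangle=\langle a_{\d_{n}}^{2k},s_{\l}\rangle$, together with the verification that $\m^{bc}=(2kn,\l^{bc}_1,\ldots,\l^{bc}_n)$, fills in the details the paper leaves implicit. Nothing further is needed.
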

\noindent Thus adding the tetris type shape
\bigskip

\begin{center}\begin{picture}(200,0)\put(100,0){\large{$\stackrel{2k}{\overbrace{\hspace{1.55cm}}}$}}\put(100,-10){\tiny \young(\hfil{ \ $\cdots$ \ \  }\hfil)} \put(78,-30){$n$}\put(86,-32){\LARGE $\biggl\{\stackrel{\ }{\ } \biggr.$} \put(100,-43.5){\line(0,0){33.5}} \put(120,-30){\vdots}\put(144.5,-43.5){\line(0,0){33.5}}\put(100,-51.5){\tiny \young(\hfil{ \ $\cdots$ \ \  }\hfil)}

\end{picture}\end{center}\vspace{.7in}

\noindent to the left of the diagram of $\l$ does not change the coefficient. If $k=1$, we denote this tetris type shape by $L(n)$.  \bigskip

\noindent {\bf Note:}	For $k = 1$ this is ($23$b) of \cite{STW}.\bigskip

\section{Recursive formulas in the case $k=1$}

For the remainder of the article we set $k = 1$. In this section we prove two non-trivial recursive formulas  involving tetris type shapes and present a third, conjectural, such formula. 

The following lemma and its corollary justify the assumption of the next theorem. 

\begin{lemma}
Suppose $\l \vdash n(n-1)$ with $n-1\leq  \ell(\l)\leq n$ and $\langle a_{\d_n}^{2}, s_{\l}\rangle\neq 0$. If $\l_n=\l_{n-1}=\ldots =\l_{n-i}=s$, then $i\leq s$, \emph{i.e.}, the maximum number of rows of size $s$ at the bottom of the diagram is $s+1$.  
\end{lemma}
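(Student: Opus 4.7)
The plan is to work directly with the identity $\langle a_{\d_n}^2, s_\l\rangle = [x^{\l+\d_n}]\,a_{\d_n}^3$ and to run a degree count in the ``bottom'' variables $x_{n-i}, x_{n-i+1}, \ldots, x_n$. Under the hypothesis $\l_n = \l_{n-1} = \cdots = \l_{n-i} = s$, the entries of $\l+\d_n$ in positions $n-i,\,n-i+1,\ldots,n$ are $s+i,\,s+i-1,\ldots,s$, so the total degree of the target monomial $x^{\l+\d_n}$ in those variables equals
\[
D \;:=\; \sum_{j=0}^{i}(s+j) \;=\; (i+1)s + \binom{i+1}{2}.
\]

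The next step is to factor $a_{\d_n}^3 = \prod_{1\leq p<q\leq n}(x_p-x_q)^3$ as $A\cdot B\cdot C$, where $A$ is the sub-product over pairs $(p,q)$ with both indices in $\{1,\ldots,n-i-1\}$, $B$ is the sub-product over pairs with both indices in $\{n-i,\ldots,n\}$, and $C$ is the sub-product over the mixed pairs. Then $A$ contributes degree $0$ in the bottom variables; $B$ is homogeneous in the bottom variables of total degree exactly $3\binom{i+1}{2}$; and each factor of $C$ expands as
\[
(x_p-x_q)^3 \;=\; \sum_{a=0}^{3}\binom{3}{a}(-1)^{3-a} x_p^a x_q^{3-a}\qquad (p\text{ top},\ q\text{ bottom}),
\]
which contributes a non-negative amount $3-a$ to the bottom-variable degree. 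Consequently every monomial occurring in $a_{\d_n}^3$ with nonzero coefficient has total degree in $x_{n-i},\ldots,x_n$ at least $3\binom{i+1}{2}$.

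If $\langle a_{\d_n}^2, s_\l\rangle \neq 0$, then $x^{\l+\d_n}$ really does occur in $a_{\d_n}^3$, which forces $D \geq 3\binom{i+1}{2}$. Expanding, $(i+1)s + \binom{i+1}{2} \geq 3\binom{i+1}{2}$ simplifies to $(i+1)s \geq i(i+1)$, hence $s \geq i$, as claimed.

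I do not expect a serious obstacle here: the lemma reduces to a one-line degree comparison, and the hypothesis of repeated bottom parts converts directly into the arithmetic progression $s,s+1,\ldots,s+i$ of bottom-variable exponents in $\l+\d_n$. The only bookkeeping point worth noting is that the lower bound $3\binom{i+1}{2}$ on the bottom-variable degree of every term survives the collection of like monomials, which is automatic because adding like terms preserves exponents.
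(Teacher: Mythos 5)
Your proposal is correct and follows essentially the same route as the paper's proof: both compare the bottom-variable degree $(i+1)s+\binom{i+1}{2}$ of $x^{\l+\d_n}$ with the forced minimum degree $3\binom{i+1}{2}$ coming from the sub-product of factors $(x_p-x_q)^3$ with both indices among the bottom $i+1$ variables. Your version merely makes explicit (via the $A\cdot B\cdot C$ factorization) why the mixed factors cannot lower that bound, a point the paper leaves implicit.
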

\begin{proof} Suppose $\l_n =\l_{n-1}=\ldots= \l_{n-i} = s$ and write $$\ds x^{\l +\d_n}=(x_1^{\l_1+n-1}\cdots x_{n-i-1}^{\l_{n-i-1}+i+1})\cdot M_i,$$
where the monomial 

\begin{equation}\label{Mi}\ds M_i=x_{n-i}^{s+i} x_{n-i+1}^{s+i-1}\cdots x_{n-1}^{s+1}x_{n}^{s}\end{equation}
has degree
$$\frac{(s+i)(s+i+1)-(s-1)s}{2}=\frac{i^2+(2s+1)i+2s}{2}. $$ 

On the other hand, the product \begin{equation} \label{prod} \prod_{j=n-i+1}^n(x_{n-i}-x_j)^3\cdot \prod_{j=n-i+2}^n(x_{n-i+1}-x_j)^3\cdots (x_{n-1}-x_n)^3\end{equation} 
 contributes	powers	of	$x_{n-i}, x_{n-i+1}, \ldots, x_n$ 	to all	monomials in $a_{\d_n}^3$	and	thus
to $M_i$. However, each monomial in the product (\ref{prod}) has degree $\ds \frac{3i(i + 1)}{2}$. Comparing the degree of (\ref{prod}) and (\ref{Mi}), we see that $i \leq s$.	\end{proof}

\noindent {\bf Note:} We stated and proved the lemma for $k=1$ since only this case is needed in the article. However, the lemma is true for general $k$ which can be seen by replacing $3$ by $2k+1$ in (\ref{prod}). Then, for $\l \vdash kn(n-1)$ as in the lemma, comparing the degree of (\ref{prod}) and (\ref{Mi}), we have $ki \leq s$. Then, the maximum number of rows of size $s$ at the bottom of the diagram for $\l$ is $\ds \lfloor s/k\rfloor+1$. \medskip

We reformulate the previous lemma in terms of the box-complement of the partition $\l$ (set $m = s +1$).
\begin{cor} Suppose $\l \vdash n(n-1)$ with $n-1\leq  \ell(\l)\leq n$ and $\langle a_{\d_n}^{2}, s_{\l}\rangle\neq 0$. If $\l_1=\l_2=\ldots = \l_i=2n-m-1$, then $i \leq m$. 

\end{cor}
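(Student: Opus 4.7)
The plan is to reduce the statement directly to the previous lemma via the box-complement lemma (Lemma~\ref{bclemma}). The observation driving everything is that flipping $\lambda$ to $\lambda^{bc}$ turns a block of equal parts at the \emph{top} of the diagram (forming a rectangular ``lid'') into a block of equal parts at the \emph{bottom}, which is exactly the situation the previous lemma addresses.

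First, since $\langle a_{\d_n}^{2}, s_{\l}\rangle\neq 0$, Lemma~\ref{bclemma} gives $\langle a_{\d_n}^{2}, s_{\l^{bc}}\rangle\neq 0$. Moreover, Proposition~2.1(iii) forces $n-1\leq\ell(\l^{bc})\leq n$, so $\l^{bc}$ meets the hypotheses of the previous lemma.

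Next, I would translate the hypothesis. With $k=1$, the definition of box-complement reads
\begin{equation*}
(\l^{bc})_j \;=\; 2(n-1)-\l_{n-j+1}, \qquad j=1,2,\ldots,n.
\end{equation*}
Assuming $\l_1=\l_2=\cdots=\l_i=2n-m-1$, we compute
\begin{equation*}
(\l^{bc})_{n-j+1} \;=\; 2(n-1)-\l_j \;=\; 2(n-1)-(2n-m-1) \;=\; m-1
\end{equation*}
for every $j=1,2,\ldots,i$. Hence $(\l^{bc})_n=(\l^{bc})_{n-1}=\cdots=(\l^{bc})_{n-(i-1)}=m-1$; that is, $\l^{bc}$ has $i$ equal rows of size $m-1$ at the bottom of its diagram.

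Finally, apply the previous lemma to $\l^{bc}$ with parameter $s=m-1$ and with ``$i$'' of that lemma equal to $i-1$ (since in that lemma's indexing a run of length $r+1$ at the bottom is recorded as ``$r$''). The lemma yields $i-1\leq m-1$, i.e.\ $i\leq m$, which is the desired bound. The only thing to watch out for is the off-by-one in the indexing convention of the previous lemma, but beyond that index bookkeeping, no genuine obstacle appears: the content of the corollary is entirely carried by the box-complement symmetry, which has already been established.
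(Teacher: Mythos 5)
Your proof is correct and follows exactly the paper's route: the paper derives this corollary by reformulating the preceding lemma via the box-complement (setting $m=s+1$), which is precisely your translation of a run of $i$ equal parts $2n-m-1$ at the top of $\l$ into a run of $i$ bottom parts equal to $m-1$ in $\l^{bc}$. Your handling of the off-by-one in the lemma's indexing and the verification that $\l^{bc}$ satisfies the lemma's hypotheses are both accurate.
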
\medskip

The first recursion formula of this section follows from the following theorem. 

\begin{theorem} \label{pretetris0}Let $1 \leq m \leq n$. Let $\l\vdash n(n-1)$ with $n-1\leq  \ell(\l)\leq n$ and $\l_1=\l_2=\ldots = \l_m=2n-m-1$. Then, \begin{equation}
\langle a_{\d_n}^{2}, s_{\l}\rangle=\langle a_{\d_m}^2,s_{\langle(m-1)^m\rangle}\rangle\cdot\langle a_{\d_{n-m}}^2,s_{( \l_{m+1},\l_{m+2},\ldots,\l_n)}\rangle.\end{equation}

\end{theorem}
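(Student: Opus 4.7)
The plan is to exploit the factorization
\[
a_{\d_n}^{3} \;=\; a_{\d_m}^{3}(x_1,\ldots,x_m)\cdot a_{\d_{n-m}}^{3}(x_{m+1},\ldots,x_n)\cdot P,
\qquad P:=\prod_{\substack{1\le i\le m\\ m<j\le n}}(x_i-x_j)^{3},
\]
and then extract the coefficient of $x^{\l+\d_n}$ from both sides, using (\ref{schurinvand}). The hypothesis $\l_1=\cdots=\l_m=2n-m-1$ will force all of the action in $P$ into a single term, so that the coefficient factors cleanly.

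The heart of the argument is a degree count on the first $m$ variables. First I would compute
\[
\sum_{i=1}^{m}(\l_i+n-i)=m(2n-m-1)+mn-\tfrac{m(m+1)}{2}=\tfrac{3m(2n-m-1)}{2}.
\]
On the other hand, the total $x_1,\ldots,x_m$-degree contributed by $a_{\d_n}^{3}$ is at most $\tfrac{3m(m-1)}{2}$ from $a_{\d_m}^{3}$, zero from $a_{\d_{n-m}}^{3}$, and at most $3m(n-m)$ from $P$ (the factor $(x_i-x_j)^3$ contributes at most $x_i^3$). The sum of these three upper bounds is exactly $\tfrac{3m(2n-m-1)}{2}$, so the maximum is \emph{attained}, and uniquely: $P$ must contribute the single term $\prod_{i\le m<j}x_i^{3}=\prod_{i=1}^{m}x_i^{3(n-m)}$ with coefficient $1$, and $a_{\d_{n-m}}^{3}$ contributes nothing to $x_1,\ldots,x_m$. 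This step, the uniqueness of the maximizer in $P$, is where essentially all of the content sits; once it is granted, everything else is bookkeeping.

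With $P$ pinned down, the required exponents split. In $a_{\d_m}^{3}(x_1,\ldots,x_m)$ I need the coefficient of $x_i^{\l_i+n-i-3(n-m)}=x_i^{2m-i-1}$ for $i=1,\ldots,m$; writing $2m-i-1=(m-1)+(m-i)$ identifies this monomial as $x^{\langle(m-1)^m\rangle+\d_m}$, so by (\ref{schurinvand}) this coefficient equals $\langle a_{\d_m}^{2},s_{\langle(m-1)^m\rangle}\rangle$. For $j>m$, writing $j=m+j'$ and $\n_{j'}:=\l_{m+j'}$, I need the coefficient of $x_{m+j'}^{\n_{j'}+(n-m)-j'}$ in $a_{\d_{n-m}}^{3}(x_{m+1},\ldots,x_n)$, which after relabeling variables equals $\langle a_{\d_{n-m}}^{2},s_{(\l_{m+1},\ldots,\l_n)}\rangle$ by (\ref{schurinvand}) again; a quick check that $|(\l_{m+1},\ldots,\l_n)|=n(n-1)-m(2n-m-1)=(n-m)(n-m-1)$ confirms this application is legitimate. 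Multiplying the three coefficients ($1$ from $P$ and the two Schur coefficients) and appealing once more to (\ref{schurinvand}) on the left-hand side gives the stated identity. The boundary case where $\l_{m+1}=2n-m-1$ is harmless: the LHS vanishes by the Corollary, and on the RHS the second factor vanishes since $\n_1=2n-m-1$ exceeds the $\n_1\le 2(n-m-1)$ bound of Proposition (iii).
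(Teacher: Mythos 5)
Your proof is correct and takes essentially the same route as the paper's: the identical factorization $a_{\d_n}^{3}=a_{\d_m}^{3}\cdot B_m\cdot C_m$ with $B_m=\prod_{i\le m<j}(x_i-x_j)^{3}$, and the same degree count in $x_1,\ldots,x_m$ forcing $B_m$ to contribute only the monomial $\prod_{i\le m}x_i^{3(n-m)}$ with coefficient $1$, after which the coefficient factors as claimed. The only differences are cosmetic: you run the degree count on the total degree rather than variable by variable, and you add an explicit (and harmless) check of the boundary case $\l_{m+1}=2n-m-1$.
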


\begin{proof} We have $$x^{\l+\d_n}=(x_1^{3n-m-2}x_2^{3n-m-3}\cdots x_m^{3n-2m-1})\cdot(x_{m+1}^{\l_{m+1}+n-m-1}\cdots x_{n-1}^{\l_{n-1}+1}x_n^{\l_n}).$$
We write $a_{\d_n}^3=a_{\d_m}^3\cdot B_m\cdot C_m,$ where $$B_m=\prod_{1\leq i \leq m<j\leq n}(x_i-x_j)^3  \ \mbox{and}\   C_m=\frac{a_{\d_n}^3}{a_{\d_m}^3B_m}=\prod_{m+1\leq i<j\leq n}(x_i-x_j)^3.$$ Note that $C_m$ is obtained from $a_{\d_{n-m}}^3$ via the substitution $$x_i \rightarrow x_{i+m}, \ x_j\rightarrow x_{j+m}.$$

\noindent  Since monomials in $B_m$ contain each $x_i$, $i=1, \ldots, m$, with exponent at most $3n-3m$,  the monomials in $a_{\d_m}^3$  contributing to $\langle a_{\d_n}^{2}, s_{\l}\rangle$ are of the form $$x_1^{2m-2}x_2^{2m-3} \cdots x_m^{m-1}\cdot E=x^{\langle(m-1)^m\rangle+\d_m}\cdot E,$$ where $E$ is a monomial in the variables $x_1, \ldots, x_m$. Since $$\ds \deg(a_{\d_m}^3)=\deg(x^{\langle(m-1)^m\rangle+\d_m})= \frac{3m(m-1)}{2},$$ we have $E=1$. Hence, the only monomial in $B_m$ contributing to $\langle a_{\d_n}^{2}, s_{\l}\rangle$ is $x_1^{3n-3m}x_2^{3n-3m}\cdots x_m^{3m-3n}$ (with coefficient $1$). \medskip

Therefore, $\langle a_{\d_n}^{2}, s_{\l}\rangle=\a_m \cdot \b_m$, where $$\a_m= \langle a_{\d_m}^2,s_{\langle(m-1)^m\rangle}\rangle$$ and $\b_m$ is the coefficient of $\ds x_{m+1}^{\l_{m+1}+n-m-1}\cdots x_{n-1}^{\l_{n-1}+1}x_n^{\l_n}$ in $C_m$, \emph{i.e.}, $$\b_m=\langle a_{\d_{n-m}}^2,s_{( \l_{m+1},\l_{m+2},\ldots,\l_n)}\rangle.$$ \end{proof}

\begin{cor} \label{tetris0}
Let $1 \leq m \leq n$. Let $\l\vdash n(n-1)$ with $n-1\leq  \ell(\l)\leq n$ and $\l_1=\l_2=\ldots = \l_m=2n-m-1$. Let $\m\vdash n(n+1)$ with parts $\m_1=\m_2=\ldots = \m_{m+1}=2n-m$ and  (if $m<n$) $\m_j=\l_{j-1}$ for $j=m+2, \ldots, n+1$. Then \begin{equation}\langle a_{\d_{n+1}}^{2}, s_{\m}\rangle= (-1)^m(2m+1)\langle a_{\d_n}^{2}, s_{\l}\rangle.\end{equation}
 
	\end{cor}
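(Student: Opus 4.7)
The strategy is to apply Theorem \ref{pretetris0} separately to $\l$ and to $\m$, factoring each Schur coefficient into a ``head'' piece depending only on $m$ and a common ``tail'' piece, thereby reducing the corollary to a single identity that no longer involves $(\l_{m+1},\ldots,\l_n)$.

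Since $\l_1=\cdots=\l_m = 2n-m-1$, Theorem \ref{pretetris0} applied with the pair $(n,m)$ yields
\[
\langle a_{\d_n}^{2}, s_{\l}\rangle \;=\; \alpha_m\cdot \gamma,
\]
where $\alpha_m:=\langle a_{\d_m}^{2}, s_{\langle(m-1)^m\rangle}\rangle$ and $\gamma:=\langle a_{\d_{n-m}}^{2}, s_{(\l_{m+1},\ldots,\l_n)}\rangle$. Since $\m_1=\cdots=\m_{m+1}=2n-m=2(n+1)-(m+1)-1$, the same theorem applied with the pair $(n+1,m+1)$ yields
\[
\langle a_{\d_{n+1}}^{2}, s_{\m}\rangle \;=\; \alpha_{m+1}\cdot \gamma,
\]
the tail factor being identical because $(\m_{m+2},\ldots,\m_{n+1})=(\l_{m+1},\ldots,\l_n)$ by construction. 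The corollary is therefore equivalent to the single identity
\[
\alpha_{m+1}\;=\;(-1)^m(2m+1)\,\alpha_m. \qquad (\ast)
\]

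To prove $(\ast)$, I would compute $\alpha_{m+1}=[x^{\langle m^{m+1}\rangle+\d_{m+1}}]\,a_{\d_{m+1}}^{3}$ directly, using the factorization $a_{\d_{m+1}}^{3}=\prod_{j=2}^{m+1}(x_1-x_j)^3\cdot \bar a_{\d_m}^{3}$ exactly as in the proof of Lemma \ref{bclemma}. Since $\bar a_{\d_m}^{3}$ does not involve $x_1$, the full power of $x_1$ in the target monomial $x_1^{2m}x_2^{2m-1}\cdots x_{m+1}^{m}$ must be supplied by the first factor. Extracting $Q(x_2,\ldots,x_{m+1}):=[x_1^{2m}]\prod_{j=2}^{m+1}(x_1-x_j)^3$ and relabeling $y_i=x_{i+1}$ gives
\[
\alpha_{m+1}=(-1)^m\sum_{\substack{k\in\{0,1,2,3\}^m\\ k_1+\cdots+k_m=m}}\prod_{i=1}^{m}\binom{3}{k_i}\,[y_1^{2m-1-k_1}y_2^{2m-2-k_2}\cdots y_m^{m-k_m}]\, a_{\d_m}(y)^3,
\]
in which the ``baseline'' composition $k=(1,1,\ldots,1)$ contributes exactly $(-1)^m 3^m\alpha_m$.

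The main obstacle will be to show that the remaining compositions assemble so that the full right-hand side equals $(-1)^m(2m+1)\alpha_m$. Each non-baseline $k$ yields a term of the form $sgn(\xi)\,\langle a_{\d_m}^{2}, s_{\l'}\rangle$ for some auxiliary partition $\l'$ determined by sorting the exponents (via the antisymmetry (\ref{calpha})), and in general many of these auxiliary Schur coefficients are non-zero. The route I would attempt is to rewrite $Q$ via the dual Cauchy identity as $Q(y)=(-1)^m\sum_{\l\vdash m,\ \l_1\le 3} s_{\l'}(1,1,1)\,s_{\l}(y)$, substitute into the coefficient extraction, and invoke the Littlewood--Richardson rule for the rectangular shape $(m^m)$ (which rigidly pairs $\l$ with its complement in the rectangle) to collapse the sum to a single proportionality constant times $\alpha_m$. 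Identifying how this constant becomes precisely $(-1)^m(2m+1)$ is where the bookkeeping is most delicate and is the technical heart of the argument.
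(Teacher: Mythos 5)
Your reduction is exactly the one the paper uses: applying Theorem \ref{pretetris0} to $\l$ with the pair $(n,m)$ and to $\m$ with the pair $(n+1,m+1)$ produces the same tail factor $\gamma=\langle a_{\d_{n-m}}^2,s_{(\l_{m+1},\ldots,\l_n)}\rangle$, so the corollary is equivalent to $\a_{m+1}=(-1)^m(2m+1)\a_m$. That part is correct and complete. The genuine gap is that you never establish this identity: you explicitly defer ``identifying how this constant becomes precisely $(-1)^m(2m+1)$'' as the technical heart, which is precisely the content of the claim. The paper closes it in one line by quoting the closed-form evaluation $\a_m=\langle a_{\d_m}^2,s_{\langle(m-1)^m\rangle}\rangle=(-1)^{\binom{m}{2}}\cdot 1\cdot 3\cdots(2m-1)$ (Stanley, Exercise 7.37(b); it is also the $\eta=\langle 1^m\rangle$ case of formula (\ref{7.37.d})), from which the ratio $\a_{m+1}/\a_m=(-1)^m(2m+1)$ is immediate. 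You should cite or prove this evaluation rather than re-deriving $(\ast)$ from scratch.

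As for the route you sketch: the coefficient extraction and the dual Cauchy expansion of $Q$ are fine, and pushing one step further gives
\begin{equation*}
\a_{m+1}=(-1)^m\sum_{\substack{\lambda\vdash m\\ \lambda_1\le 3,\ \ell(\lambda)\le m}} s_{\lambda'}(1,1,1)\,\langle a_{\d_m}^2, s_{\lambda^{c}}\rangle,
\end{equation*}
where $\lambda^{c}$ is the rotated complement of $\lambda$ in the $m\times m$ square (this is where the Littlewood--Richardson rule for a rectangle enters). But this does not ``collapse to a single term'': already the baseline $\lambda=\langle 1^m\rangle$ contributes $s_{(m)}(1,1,1)\,\a_m=\binom{m+2}{2}\a_m$, not $(2m+1)\a_m$, so the remaining partitions must contribute with substantial cancellation, and several of the coefficients $\langle a_{\d_m}^2,s_{\lambda^{c}}\rangle$ are nonzero. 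Controlling that cancellation is essentially as hard as evaluating $\a_m$ itself, so without the closed form (or an equivalent input) the argument does not go through as written.
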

Thus, adding the tetris type shape

\begin{center}\begin{picture}(200,10) \put(100,-10){\large{$\stackrel{2n-m}{\overbrace{\hspace{2.5cm}}}$}}\ 
 \put(100,-20){\tiny  \young(\hfil\hfil\hfil{\ $\cdots$\ \  }\hfil\hfil)} \put(162.2,-29){\tiny  \young(\hfil)}\put(162.5,-61){\line(0,0){32}}  \put(165,-50){\vdots}\put(171.2,-61){\line(0,0){32}} \put(165,-50){{\LARGE $\biggl.\stackrel{\ }{\ } \biggr\}$}$m$}\put(162.2,-70){\tiny  \young(\hfil)}
 \end{picture}\end{center}\vspace{1in}
 
\noindent to the top of the diagram of $\l$ changes the coefficient by a multiple of $(-1)^m(2m + 1)$. We denote this tetris type shape by $T_0(n,m)$. \bigskip

For the physical interpretation, the partition $\l$ corresponds to the Slater state in which the angular momentum levels of the first $m$ particles are most closely bunched \cite{D}.

\begin{proof} 

As in the proof of Theorem \ref{pretetris0}, we have $\ds \langle a_{\d_{n+1}}^{2}, s_{\m}\rangle= \a_{m+1}\cdot \b_m$, where $\a_{m+1}= \langle a_{\d_{m+1}}^2,s_{\langle(m)^{m+1}\rangle}\rangle$. \medskip

\noindent By exercise $7.37$(b) of \cite{S}, \begin{equation}\label{alpham}\ds \a_m=\langle a_{\d_{m}}^2,s_{\langle(m-1)^m\rangle}\rangle=(-1)^{\binom{m}{2}}\cdot 1\cdot 3 \cdots (2m-1).\end{equation} Thus, from Theorem \ref{pretetris0}, it follows that  $$\ds \langle a_{\d_{n+1}}^{2}, s_{\m}\rangle= (-1)^m(2m+1)\ds \langle a_{\d_n}^{2}, s_{\l}\rangle.$$


\end{proof}

\note Corollary \ref{tetris0} implies the case $k=1$ of Theorem \ref{row}. \medskip

Now exercise $7.37$(c) of \cite{S} follows  easily by repeated use of  Corollary \ref{tetris0}. 

\begin{cor} \label{7.37.c} If $\l=\langle(n+i-1)^{n-i},(i-1)^i\rangle$, $1 \leq i\leq n$, then $$\langle a_{\d_n},s_{\l}\rangle=(-1)^{\frac{1}{2}(n-1)(n-2i)}[1\cdot 3 \cdots (2i-1)]\cdot[1\cdot 3 \cdots (2(n-i)-1)].$$

\end{cor}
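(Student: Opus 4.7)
The plan is to induct on $n \geq i$ with $i$ fixed, applying Corollary \ref{tetris0} repeatedly to the family of partitions $\l^{(n)}:=\langle(n+i-1)^{n-i},(i-1)^i\rangle$. At each stage the partition $\l^{(n)}$ has a wide flat top, which turns out to be exactly the hypothesis needed to invoke Corollary \ref{tetris0} with $m = n-i$, so the transition $\l^{(n)} \to \l^{(n+1)}$ is a single tetris move.

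For the base case $n=i$, the partition collapses to $\l^{(i)} = \langle(i-1)^i\rangle$, the $i\times(i-1)$ rectangle. In the claimed formula the second odd-product is the empty product $1$, and the sign exponent reduces to $\tfrac12(i-1)(-i) \equiv \binom{i}{2}\pmod 2$. The required value $(-1)^{\binom{i}{2}}\cdot 1\cdot 3\cdots(2i-1)$ is then exactly equation (\ref{alpham}), i.e.\ exercise 7.37(b) of \cite{S}.

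For the inductive step, I would verify that $\l^{(n)}$ has $\l^{(n)}_1 = \cdots = \l^{(n)}_{n-i} = n+i-1 = 2n-(n-i)-1$, matching the hypothesis of Corollary \ref{tetris0} with $m = n-i$. A direct comparison of parts shows that the partition $\m$ produced by the corollary is exactly $\l^{(n+1)}$, yielding
\begin{equation*}
\langle a_{\d_{n+1}}^2, s_{\l^{(n+1)}}\rangle = (-1)^{n-i}\bigl(2(n-i)+1\bigr)\langle a_{\d_n}^2, s_{\l^{(n)}}\rangle.
\end{equation*}
In the boundary situation $n=i$ we have $m=0$, which falls just outside the stated range of Corollary \ref{tetris0}; here Theorem \ref{row} supplies the same recursion with multiplier $1 = (-1)^0(2\cdot 0+1)$. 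The odd factor $2(n-i)+1$ extends the second odd-product from $1\cdot 3\cdots(2(n-i)-1)$ to $1\cdot 3\cdots(2(n+1-i)-1)$, while the $(-1)^{n-i}$ accumulates the sign.

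The main (and really only) obstacle is a short parity check: iterating from $n=i$ to the desired $n$ produces a cumulative sign $(-1)^{\binom{i}{2}}\prod_{j=0}^{n-i-1}(-1)^j = (-1)^{\binom{i}{2}+\binom{n-i}{2}}$, and one must show this equals $(-1)^{\frac12(n-1)(n-2i)}$. This follows because
\begin{equation*}
\binom{i}{2}+\binom{n-i}{2}-\tfrac12(n-1)(n-2i) = i(i-1),
\end{equation*}
which is always even. Once this parity identity is observed, the induction closes cleanly and no further ingredient is needed beyond Corollary \ref{tetris0}, Theorem \ref{row}, and the rectangular case (\ref{alpham}).
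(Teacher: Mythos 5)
Your proposal is correct and is exactly the argument the paper intends: the paper states only that the corollary ``follows easily by repeated use of Corollary \ref{tetris0},'' and your induction on $n$ from the rectangular base case $\langle(i-1)^i\rangle$ (via equation (\ref{alpham})), with the $m=0$ boundary step handled by Theorem \ref{row}, supplies precisely those omitted details. The parity identity $\binom{i}{2}+\binom{n-i}{2}-\tfrac12(n-1)(n-2i)=i(i-1)$ checks out, so the sign bookkeeping closes the induction as claimed.
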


We state conjecturally a similar combinatorial recursive property. The conjecture has been verified for $n \leq 6$ using  Maple.

\begin{con}\label{conjecture} Let $1 \leq m \leq n$. Let $\l\vdash n(n-1)$ with $n-1\leq  \ell(\l)\leq n$ and parts $\l_1=\l_2=\ldots = \l_m=2n-m-2$. Let $\m\vdash n(n+1)$ with parts $\m_1=2n-m$, $\m_2=\ldots = \m_{m+1}=2n-m-1$ and  (if $m<n$) $\m_j=\l_{j-1}$ for $j=m+2, \ldots, n+1$. Then \begin{equation}\langle a_{\d_{n+1}}^{2}, s_{\m}\rangle= (-1)^m(m+1)\langle a_{\d_n}^{2}, s_{\l}\rangle.\end{equation}

\end{con}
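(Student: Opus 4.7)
The plan is to adapt the strategy of Theorem~\ref{pretetris0}. Write
$$a_{\d_{n+1}}^{3}=a_{\d_{m+1}}^{3}\cdot B_{m+1}\cdot C_{m+1},$$
where $B_{m+1}=\prod_{1\leq i\leq m+1<j\leq n+1}(x_i-x_j)^3$ and $C_{m+1}=\prod_{m+2\leq i<j\leq n+1}(x_i-x_j)^3$, and extract the coefficient of
$$x^{\m+\d_{n+1}}=x_1^{3n-m}\prod_{i=2}^{m+1}x_i^{3n-m-i}\cdot \prod_{j=m+2}^{n+1}x_j^{\l_{j-1}+n+1-j}.$$
Since each variable $x_i$ with $i\leq m+1$ has exponent at most $3(n-m)$ in $B_{m+1}$, the minimum contribution from $a_{\d_{m+1}}^{3}$ is $2m$ to $x_1$ and $2m-i$ to $x_i$ for $i=2,\ldots,m+1$. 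These bounds sum to $\tfrac{m(3m+1)}{2}$, short of $\deg(a_{\d_{m+1}}^3)=\tfrac{3m(m+1)}{2}$ by exactly $m$. Unlike in the proof of Theorem~\ref{pretetris0}, the bounds do \emph{not} saturate the degree, so the contribution of $a_{\d_{m+1}}^3$ must be of the form $x_1^{2m+e_1}x_2^{2m-2+e_2}\cdots x_{m+1}^{m-1+e_{m+1}}$ for some $(e_1,\ldots,e_{m+1})$ with $e_i\geq 0$ and $\sum e_i=m$, while $B_{m+1}$ contributes $x_i^{3(n-m)-e_i}$ to each $x_i$ with $i\leq m+1$.

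Accordingly, $\langle a_{\d_{n+1}}^{2}, s_{\m}\rangle$ decomposes as a double sum: over the vector $(e_1,\ldots,e_{m+1})$ and over the distributions of exponents $a_{ij}\in\{0,1,2,3\}$ across the factors of $B_{m+1}$. Each term is a product of three pieces: (i) the coefficient of $x_1^{2m+e_1}\cdots x_{m+1}^{m-1+e_{m+1}}$ in $a_{\d_{m+1}}^3$; (ii) a signed binomial factor $\prod_{i,j}\binom{3}{a_{ij}}(-1)^{3-a_{ij}}$ from $B_{m+1}$; and (iii) a coefficient in $C_{m+1}$, i.e.\ in $a_{\d_{n-m}}^3$ after the variable shift $x_\ell\mapsto x_{\ell+m+1}$, of a tail monomial whose exponents are determined by $(\l_{m+1},\ldots,\l_n)$ and the column sums of the $a_{ij}$'s. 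The analogous factorization $a_{\d_n}^3=a_{\d_m}^3\cdot B_m\cdot C_m$ applied to $\langle a_{\d_n}^{2}, s_{\l}\rangle$ produces a smaller double sum of the same shape, since $\l$ has only $m$ rows of the borderline length $2n-m-2=2n-(m+1)-1$, one fewer than the $m+1$ needed for Theorem~\ref{pretetris0} to apply directly. The conjectured identity is therefore equivalent to a purely combinatorial identity equating the two multinomial sums up to the factor $(-1)^m(m+1)$.

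The main obstacle will be establishing this combinatorial identity. I would first handle two anchor cases where the double sums collapse. The smallest non-vacuous case is $n=m+1$, which forces $\l=(m^{m+1})$, and the identity reduces to comparing $\langle a_{\d_{m+2}}^{2}, s_{(m+2,(m+1)^m,m)}\rangle$ with $\langle a_{\d_{m+1}}^{2}, s_{(m^{m+1})}\rangle$; both sides are amenable to Jacobi--Trudi together with Exercise~7.37 of \cite{S}, and their ratio should exhibit the sign $(-1)^m$ and multiplicity $m+1$ transparently. The other anchor is $\l=2\d_n$, where Corollary~\ref{unif} gives $\langle a_{\d_n}^{2}, s_{\l}\rangle=1$, so the left-hand sum must collapse to the single value $(-1)^m(m+1)$, pinpointing a signed binomial identity in the spirit of (\ref{alpham}). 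With these anchors and the underlying identity in hand, induction on $n$ (peeling the top row via Theorem~\ref{row}), or on the tail shape $(\l_{m+1},\ldots,\l_n)$ using Lemma~\ref{bclemma} (box-complement) together with Corollary~\ref{leftblock} (leftblock), should promote the identity to the full conjecture.
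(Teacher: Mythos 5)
Your reduction is essentially the one the paper itself carries out: factor $a_{\d_{n+1}}^{3}=a_{\d_{m+1}}^{3}\cdot \bar B_m\cdot \bar C_m$ and $a_{\d_n}^{3}=a_{\d_m}^{3}\cdot B_m\cdot C_m$, observe that the exponent bounds fall short of $\deg a_{\d_{m+1}}^3$ by exactly $m$, so the head factor contributes monomials indexed by a partition $l\vdash m$ (and a permutation of its parts), and reduce the claim to the identity $\sum_{\b}\overline{\mathcal{C}}(l,\b)=(-1)^m(m+1)\sum_{\a}\mathcal{C}(l,\a)$ for every $l\vdash m$. But note that this statement is a \emph{conjecture} in the paper: the author gets exactly this far, verifies the residual identity only for $m=1$ (Proposition \ref{conjcase1}), proves the case $m=n-1$ separately via Exercise 7.37(d) of \cite{S} (Proposition \ref{conjn-1}), and leaves the general case open (checked by Maple for $n\le 6$). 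So your proposal, which also defers the combinatorial identity, does not yet constitute a proof.

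Moreover, your plan for closing the gap has concrete problems. First, the anchor $\l=2\d_n$ is vacuous: its parts are all distinct, so the hypothesis $\l_1=\cdots=\l_m=2n-m-2$ fails for every $m\ge 1$ (even for $m=1$ one would need $\l_1=2n-3$, not $2n-2$), hence Corollary \ref{unif} tells you nothing about the conjecture. Second, the residual identity is a statement about coefficients of $a_{\d_m}^3$ and $a_{\d_{m+1}}^3$ for each partition $l\vdash m$ \emph{separately}; a special choice of $\l$ only tests one weighted combination $\sum_l\mathcal{D}(l)(\cdots)$ of these identities, so anchor cases cannot be promoted to the identity for each $l$ by varying the tail. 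Third, the induction tools you name do not apply: Theorem \ref{row} removes a top row of length $2n$, whereas the rows at issue have length $2n-m-2$, and Lemma \ref{bclemma} and Corollary \ref{leftblock} preserve coefficients without reducing $m$ or the partition $l$. The one anchor that does work, $n=m+1$, is exactly the paper's Proposition \ref{conjn-1}, and even there the paper must abandon the multinomial-sum route and argue instead through the content formula (\ref{7.37.d}).
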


Thus adding the tetris type shape

\begin{center}\begin{picture}(200,10) \put(100,-10){\large{$\stackrel{2n-m}{\overbrace{\hspace{2.5cm}}}$}}\ 
 \put(100,-20){\tiny  \young(\hfil\hfil\hfil{\ $\cdots$\ \  }\hfil\hfil)} \put(153.2,-29){\tiny  \young(\hfil)}\put(153.3,-61){\line(0,0){32}}  \put(156,-50){\vdots}\put(162.4,-61){\line(0,0){32}} \put(155,-51){{\LARGE $\biggl.\stackrel{\ }{\ } \biggr\}$}$m$}\put(153.1,-70){\tiny  \young(\hfil)}
 \end{picture}\end{center}\vspace{1in}

\noindent to the top of the diagram of $\l$ changes the coefficient by a multiple of $(-1)^m(m + 1)$. We denote this tetris type shape by $T_1(n,m)$. \bigskip

We can attempt to prove the conjecture in a manner similar to the proof of Theorem \ref{pretetris0}.

We have $x^{\l+\d_n}$ is equal to $$(x_1^{3n-m-3}x_2^{3n-m-4}x_3^{3n-m-5}\ldots x_m^{3n-2m-2})\cdot(x_{m+1}^{\l_{m+1}+n-m-1}\ldots x_{n-1}^{\l_{n-1}+1}x_n^{\l_n})$$and  $x^{\m+\d_{n+1}}$ is equal to $$(x_1^{3n-m}x_2^{3n-m-2}x_3^{3n-m-3}\ldots x_{m+1}^{3n-2m-1})\cdot(x_{m+2}^{\l_{m+1}+n-m-1}\ldots x_n^{\l_{n-1}+1}x_{n+1}^{\l_n}). $$\vspace{.01in}

We write $$ a_{\d_n}^3=a_{\d_m}^3 \cdot B_m \cdot C_m$$ with $B_m$ and $C_m$ as in the proof of Theorem \ref{pretetris0}. Similarly, we write  $$ a_{\d_{n+1}}^3=a_{\d_{m+1}}^3 \cdot \bar{B}_m \cdot \bar{C}_m,$$ where $$\bar{B}_m=\prod_{1\leq i \leq m+1<j\leq n+1}(x_i-x_i)^3$$ and $$  \bar{C}_m=\frac{a_{\d_{n+1}}^3}{a_{\d_{m+1}}^3\bar{B}_m}=\prod_{m+2\leq i<j\leq n+1}(x_i-x_i)^3.$$ 

\noindent By the argument in the proof of Theorem \ref{pretetris0},
the monomials in $a^3_{\d_m}$	contributing to $\langle a_{\d_n}^3,s_{\l} \rangle$ are of the form $$x_1^{2m-3}x_2^{2m-4}\cdots x_m^{m-2}\cdot F,$$
where $F$ is a  monomial of degree $m$ in the variables $x_1, x_2, \ldots, x_m$ of degree $\deg( a_{\d_m}^3)-\deg(x_1^{2m-3}x_2^{2m-4}\cdots x_m^{m-2})  =m$.\medskip

Similarly, monomials in $a^3_{\d{m+1}}$ contributing to 	 $\langle a_{\d_{n+1}}^3,s_{\m} \rangle$, are of the form $$x_1^{2m}x_2^{2m-2}x_3^{2m-3}\cdots x_{m+1}^{m-1}\cdot G,$$
 where $G$ is a  monomial of degree $m$ in  $x_1, x_2, \ldots, x_m, x_{m+1}$.\medskip
 
Let $l = (l_1,l_2,\ldots,l_m)$ be a partition of $m$ and set $l^* = (l_1,l_2,\ldots ,l_m,$ $ l_{m+1} = 0)$. Let $\a \in S_m$ be a permutation of  $\{1,2,
\ldots ,m\}$ and let $\b\in S_{m+1}$ be a permutation of $\{1, 2, \ldots, m, m + 1\}$. Denote by $\mathcal{C}(l, \a)$ the coefficient of
\begin{equation}x_1^{2m-3+l_{\a(1)}}x_2^{2m-4+l_{\a(2)}}\cdots x_m^{m-2+l_{\a(m)}}=x^{\langle(m-2)^m\rangle +\a(l)+\d_m}\end{equation}
in $a^3_{\d_m}$	and denote by $\overline{\mathcal{C}}(l,\b)$  the coefficient of $$x_1^{2m+l_{\b(1)}}x_2^{2m-2+l_{\b(2)}}x_3^{2m-3+l_{\b(3)}}\cdots x_m^{m+l_{\b(m)}}x_{m+1}^{m-1+l_{\b(m+1)}}=$$\begin{equation}x^{\langle m, (m-1)^m\rangle +\b(l^*)+\d_{m+1}}\end{equation}
 in $a^3_{\d_{m+1}}$. Denote by $\mathcal{D}($l) the coefficient of $$x_1^{3n-3m-l_{\a(1)}}x_2^{3n-3m-l_{\a(2)}}\cdots x_m^{3n-3m-l_{\a(m)}}x_{m+1}^{\l_{m+1}+n-m-1}\cdots x_n^{\l_n}$$
in $B_m \cdot C_m$. Since $B_m \cdot C_m$ is symmetric in $x_1,x_2,\ldots ,x_m$, $\mathcal{D}(l)$ does not depend on the permutation $\a$. The coefficient of $$x_1^{3n-3m-l_{\b(1)}}x_2^{3n-3m-l_{\b(2)}}\cdots x_m^{3n-3m-l_{\b(m)}}x_{m+1}^{3n-3m-l_{\b(m+1)}}x_{m+2}^{\l_{m+1}+n-m-1}\cdots x_{n+1}^{\l_n}$$
in $\bar{B}_m \cdot  \bar{C}_m$ is again $\mathcal{D}(l)$ (since at least one of $l_{\b(j)}$ equals $0$). Then $$\langle a_{\d_n}^2,s_{\l}\rangle = \sum_{l}\left(\mathcal{D}(l)\sum_{\a}\mathcal{C}(l,\a)\right)$$ and $$\langle a_{\d_{n+1}}^2,s_{\m}\rangle = \sum_{l}\left(\mathcal{D}(l)\sum_{\b}\overline{\mathcal{C}}(l,\b)\right),$$ 
where the first summation is, in each case, over all partitions $l$ of $m$ and the second summation is over all \emph{distinct} permutations $\a$ of the parts of $l = (l_1,l_2,\ldots,l_m)$, respectively all \emph{distinct} permutations $\b$ of the parts  $\{l_1,l_2,\ldots ,l_m, l_{m+1} = 0\}$.

To prove the conjecture, it remains to show that for each partition $l = (l_1,l_2,\ldots,l_m)$ of $m$ of length at most $m$,\begin{equation} \label{conjperm}\sum_{\b }\overline{\mathcal{C}}(l,\b)=(-1)^m(m+1)\sum_{\a}\mathcal{C}(l,\a).\end{equation}

If $m = 1$, the conjectural relation (\ref{conjperm}) can be verified directly. We have $l = (1)$ and $l^* = (1, 0)$. The left hand side adds the respective coefficients of $x^3_1$ and of $x^2_1x_2$ in $\a^3_{\d_2}$ and it equals $-2$. The sum on the right hand side has only one element, the coefficient of $1$ in $a^3_{\d_1}$, which is $1$. Therefore, the right hand side also equals $-2$. This proves case $m = 1$ of Conjecture \ref{conjecture}.
\begin{prop}\label{conjcase1}Let $\l=(2n-3,\l_2, \ldots, \l_n) \vdash n(n-1)$ and $\m=(2n-1, 2n-2, \l_2, \ldots, \l_n)\vdash n(n+1)$.  Then \begin{equation}\langle a_{\d_{n+1}}^2, a_{\m}\rangle =-2 \langle a_{\d_n}^2, s_{\l} \rangle. \end{equation}\end{prop}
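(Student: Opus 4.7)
The plan is to apply the decomposition framework developed in the discussion immediately preceding the proposition, specialized to $m=1$. The two general formulas
$$\langle a_{\d_n}^2, s_{\l}\rangle = \sum_l \mathcal{D}(l)\sum_\a \mathcal{C}(l, \a), \qquad \langle a_{\d_{n+1}}^2, s_{\m}\rangle = \sum_l \mathcal{D}(l)\sum_\b \overline{\mathcal{C}}(l, \b)$$
reduce the claim to verifying the identity (\ref{conjperm}) in the case $m=1$.

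First, I would observe that since the only partition of $m=1$ is $l=(1)$, both sums above collapse to a single term sharing the common prefactor $\mathcal{D}((1))$. The proposition therefore amounts to the numerical identity
$$\sum_\b \overline{\mathcal{C}}((1),\b) = -2\sum_\a \mathcal{C}((1),\a),$$
which is exactly the $m=1$ instance of (\ref{conjperm}).

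Second, I would verify this numerical identity by direct expansion in two variables, as is done in the paragraph immediately above the statement. The $\a$-sum has only the trivial permutation of $\{1\}$ and contributes the coefficient of $1$ in $a_{\d_1}^3 = 1$, namely $1$. The $\b$-sum has the two distinct orderings of $l^* = (1,0)$ and contributes the coefficients of $x_1^3$ and $x_1^2 x_2$ in $a_{\d_2}^3 = (x_1 - x_2)^3$, namely $1$ and $-3$, with sum $-2$. Substituting these values back into the collapsed formulas yields $\langle a_{\d_{n+1}}^2, s_\m\rangle = -2\,\mathcal{D}((1)) = -2\langle a_{\d_n}^2, s_\l\rangle$, as claimed.

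There is no serious obstacle here: the framework preceding the proposition has already carried out all of the combinatorial work, and what distinguishes the provable case $m=1$ from the open cases of Conjecture \ref{conjecture} is precisely that the outer sum over partitions $l\vdash m$ has a single term. For larger $m$ one would additionally have to control the interference between the non-trivial permutations $\a$ of the parts of $l$ and $\b$ of the parts of $l^*$, which is why the conjecture remains open in general.
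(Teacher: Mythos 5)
Your proposal is correct and follows essentially the same route as the paper: the paper's own proof of this proposition is precisely the direct verification of relation (\ref{conjperm}) for $m=1$, computing the coefficients of $x_1^3$ and $x_1^2x_2$ in $a_{\d_2}^3=(x_1-x_2)^3$ (namely $1$ and $-3$, summing to $-2$) against the single coefficient $1$ from $a_{\d_1}^3$. Your additional remark about why larger $m$ is harder also matches the paper's framing of the remaining cases of Conjecture \ref{conjecture}.
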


Next, we prove the conjecture for $m = n - 1$. This will be needed for the proof of the last recursive formula of the article. We first introduce some definitions following \cite[Chapter 7]{S}. Denote by $f_{\l}$ the number of standard Young tableaux (SYT) of shape $\l$. Given a Young diagram $\l$ and a square $u = (i,j)$ of $\l$, 
define the content, $c(u)$, of $\l$ at $u = (i, j)$ by
$$c(u) = j - i.$$ 
If $\l$ is a partition of $n(n - 1)$ that can be written as $\l=\eta+\langle (n-2)^n\rangle$, where $\eta$  is a partition of $n$, then, by \cite[Exercise 7.37.d]{S} \begin{equation}\label{7.37.d}\langle a_{\d_n}^{2}, s_{\l}\rangle=(-1)^{\binom{n}{2}}f_{\eta}\prod_{s\in \eta}(1-2c(s)).\end{equation} As noted in \cite{H}, $$f_{\l}=\sum_{\n\in \l\setminus 1} f_{\n},$$ where $\l\setminus 1$ is the set of partitions obtained from $\l$ by removing a corner. (This formula follows directly from the construction of standard Young tableaux.) \bigskip

Consider the partitions $\l=\langle (n-1)^n\rangle \vdash n(n-1)$ and $\m=\langle n+1, n^{n-1}, n-1) \vdash n(n+1)$. We have  \begin{equation} \l=\langle(n-1)^n\rangle=\langle 1^n\rangle+\langle (n-2)^n\rangle \end{equation}
and \begin{equation} \m=\langle2,1^{n-1}\rangle+\langle(n-1)^{n+1}\rangle.\end{equation}\medskip

\noindent Then, by (\ref{7.37.d}) and the immediate fact that $f_{\langle 1^n\rangle}=1$ and $f_{\langle2,1^{n-1}\rangle}=n$, it follows that 
$\langle a_{\d_n}^{2}, s_{\l}\rangle$ equals \begin{equation} \label{conjl}(-1)^{\binom{n}{2}}f_{\langle 1^n\rangle}\prod_{s\in \langle 1^n\rangle}(1-2c(s))=(-1)^{\binom{n}{2}}1\cdot 3\cdot 5 \cdots (2n-1)\end{equation} and $\langle a_{\d_{n+1}}^{2}, s_{\m}\rangle$ equals $$ (-1)^{\binom{n+1}{2}}f_{\langle2,1^{n-1}\rangle}\prod_{s\in \langle2,1^{n-1}\rangle}(1-2c(s))=$$ \begin{equation}\label{conjm}(-1)^{\binom{n+1}{2}}n(-1)\cdot 1\cdot 3\cdot 5 \cdots (2n-1). \end{equation}
Comparing (\ref{conjl}) and (\ref{conjm}), proves Conjecture \ref{conjecture} in the case $m=n-1$.

\begin{prop} \label{conjn-1} Let $\l=\langle (n-1)^n\rangle \vdash n(n-1)$ and $\m=\langle n+1, n^{n-1}, n-1) \vdash n(n+1)$. Then
\begin{equation}\langle a_{\d_{n+1}}^2, s_{\m} \rangle =(-1)^{n-1} n \langle a_{\d_n}^2, s_{\l} \rangle.	\end{equation} \end{prop}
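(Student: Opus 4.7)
The plan is to apply Stanley's exercise 7.37(d), namely identity (\ref{7.37.d}), to both $\langle a_{\d_n}^2, s_\l\rangle$ and $\langle a_{\d_{n+1}}^2, s_\m\rangle$ and then compare the results. This is feasible because both $\l$ and $\m$ have the required form $\xi+\langle(N-2)^N\rangle$ for appropriate $N$: writing $\l=\langle(n-1)^n\rangle=\langle 1^n\rangle+\langle(n-2)^n\rangle$ exhibits the residual shape as the single column $\eta=\langle 1^n\rangle$, while $\m=\langle n+1,n^{n-1},n-1\rangle=\langle 2,1^{n-1}\rangle+\langle(n-1)^{n+1}\rangle$ exhibits the residual shape as the hook $\eta'=\langle 2,1^{n-1}\rangle$.

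The next step is purely combinatorial: compute $f_\eta$, $f_{\eta'}$, and the content products. For the column $\eta=\langle 1^n\rangle$ there is a unique SYT, so $f_\eta=1$, and the contents of the squares are $0,-1,-2,\ldots,-(n-1)$, giving
\[
\prod_{s\in\eta}(1-2c(s))=1\cdot 3\cdot 5\cdots(2n-1).
\]
For the hook $\eta'=\langle 2,1^{n-1}\rangle$ one has $f_{\eta'}=n$ (a standard SYT count, e.g.\ by the hook length formula or by choosing which entry of $\{2,\dots,n+1\}$ occupies the corner $(1,2)$), and the contents are $0,1,-1,-2,\ldots,-(n-1)$, yielding
\[
\prod_{s\in\eta'}(1-2c(s))=(-1)\cdot 1\cdot 3\cdot 5\cdots(2n-1).
\]

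Substituting into (\ref{7.37.d}) gives
\[
\langle a_{\d_n}^2,s_\l\rangle=(-1)^{\binom{n}{2}}\cdot 1\cdot 3\cdots(2n-1),\qquad
\langle a_{\d_{n+1}}^2,s_\m\rangle=(-1)^{\binom{n+1}{2}+1}\, n\cdot 1\cdot 3\cdots(2n-1).
\]
Taking the ratio of these two closed forms collapses the odd-factorials and leaves the sign $(-1)^{\binom{n+1}{2}-\binom{n}{2}+1}=(-1)^{n+1}=(-1)^{n-1}$ together with the factor $n$, which is exactly the claim. There is no real obstacle here, since Stanley's identity does all of the heavy lifting; the only thing to verify carefully is the sign bookkeeping (the extra $-1$ coming from the square $(1,2)\in\eta'$ of content $+1$, and the parity shift from $\binom{n+1}{2}$ versus $\binom{n}{2}$), which is precisely what combines to produce the asserted factor $(-1)^{n-1}n$.
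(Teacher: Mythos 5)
Your proposal is correct and follows essentially the same route as the paper: the paper also writes $\l=\langle 1^n\rangle+\langle(n-2)^n\rangle$ and $\m=\langle 2,1^{n-1}\rangle+\langle(n-1)^{n+1}\rangle$, applies (\ref{7.37.d}) with $f_{\langle 1^n\rangle}=1$ and $f_{\langle 2,1^{n-1}\rangle}=n$, and compares the resulting closed forms (\ref{conjl}) and (\ref{conjm}). Your sign bookkeeping, including the extra factor $-1$ from the box of content $1$ and the parity shift $\binom{n+1}{2}-\binom{n}{2}=n$, matches the paper exactly.
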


Before considering the last recursive formula, we prove another  helpful lemma.

First, some notation. Suppose $\n$ is a partition of $n-1$ and $\m$ is a partition of $n$ containing $\n$. Then the shape $\m$ is obtained by adding a square to the shape $\n$. We denote by $c(\m/\n)$ the content of the square $\m/\n$ (i.e., the square added to the shape $\n$ in order to obtain the shape $\m$) in the shape $\m$.\medskip

Recall that $$f_{\n}=\sum_{\eta \in \n\setminus 1}f_{\eta},$$
where $\n\setminus 1$ is the set of shapes obtained from $\n$ by removing one square. We write this fact as \begin{equation}\label{remove}f_{\n}=\sum_{\stackrel{\eta\vdash n-2}{\eta \subseteq \n}}f_{\eta}.\end{equation}

\begin{lemma} \label{conj2-lemma} Let $\n$ be a partition of $n-1$.  We have $$nf_{\n}=\sum_{\stackrel{\m \vdash n}{\n\subseteq \m}}f_{\m}(1-2c(\m/\n)).$$  \end{lemma}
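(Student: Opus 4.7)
The plan is to reduce the lemma to the two auxiliary identities
$$\sum_{\m\succ\n} f_{\m} = n f_{\n} \qquad \text{and} \qquad \sum_{\m\succ\n} f_{\m}\, c(\m/\n) = 0,$$
where $\m\succ\n$ means $\m\vdash n$ is obtained from $\n$ by adding a single box. Expanding $f_{\m}(1-2c(\m/\n)) = f_{\m} - 2 f_{\m}\,c(\m/\n)$ and summing over $\m\succ\n$ then gives exactly $n f_{\n} - 2\cdot 0 = n f_{\n}$, proving the lemma.

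I would obtain both identities at once from Pieri's rule $s_{\n}\cdot s_{(1)} = \sum_{\m\succ\n} s_{\m}$ specialized at $x_1=\cdots=x_q=1$, together with the hook-content formula $s_{\l}(1^q)=\prod_{s\in\l}(q+c(s))/h_{\l}(s)$ from \cite{S}. The key observation is that for $\m=\n\cup\{\square_0\}$ with $\square_0$ at content $c_0=c(\m/\n)$, one has $\prod_{s\in\m}(q+c(s)) = (q+c_0)\prod_{s\in\n}(q+c(s))$, while the hook lengths of $\m$ and $\n$ differ only for boxes of $\n$ sharing a row or column with $\square_0$, where each hook increases by $1$. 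Writing
$$B_{\m} := \prod_{\substack{s\in\n \\ s\text{ in row or col of }\square_0}}\frac{h_{\n}(s)+1}{h_{\n}(s)},$$
this yields $\prod_{s\in\m}h_{\m}(s) = B_{\m}\prod_{s\in\n}h_{\n}(s)$, and consequently $s_{\m}(1^q) = s_{\n}(1^q)\cdot(q+c_0)/B_{\m}$.

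Substituting into the specialized Pieri relation and cancelling the common factor $s_{\n}(1^q)$ (a nonzero polynomial in $q$) produces the polynomial identity
$$ q = \sum_{\m\succ\n}\frac{q + c(\m/\n)}{B_{\m}}.$$
The ordinary hook length formula applied to $\n$ and $\m$ yields the bookkeeping relation $f_{\m} = n f_{\n}/B_{\m}$, so multiplying through by $n f_{\n}$ converts the identity to
$$n\, q\, f_{\n} = \sum_{\m\succ\n} f_{\m}\bigl(q+c(\m/\n)\bigr).$$
Comparing coefficients of $q$ produces the first auxiliary identity, and comparing constant terms produces the second. The only real work is the routine bookkeeping for hook-length products under the addition of a corner; this is standard and poses no substantial obstacle. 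I note that if one prefers a more representation-theoretic derivation of the first identity, it is the dimension count for $V_{\n}\uparrow_{S_{n-1}}^{S_n}=\bigoplus_{\m\succ\n} V_{\m}$, but the Pieri specialization above has the advantage of proving both identities uniformly and entirely within the symmetric-function framework already used in the paper.
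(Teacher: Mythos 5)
Your proof is correct, but it takes a genuinely different route from the paper's. The paper proves the lemma by induction on $n$: it expands $nf_{\nu}$ via the corner-removal recursion $f_{\nu}=\sum_{\eta\in\nu\setminus 1}f_{\eta}$, applies the inductive hypothesis to each $\eta$, then exchanges the order of the remove-a-box and add-a-box operations (separating the case where the added box coincides with the removed one), and finally reduces everything to the telescoping identity $\sum_{i}(\nu_i-\nu_{i+1})=\nu_1$. You instead bypass induction entirely: the principal specialization of Pieri's rule $q\,s_{\nu}(1^q)=\sum_{\mu\succ\nu}s_{\mu}(1^q)$, combined with the hook-content formula and the relation $f_{\mu}=nf_{\nu}/B_{\mu}$ from the ordinary hook length formula, yields the linear polynomial identity $nqf_{\nu}=\sum_{\mu\succ\nu}f_{\mu}\bigl(q+c(\mu/\nu)\bigr)$, whose $q$-coefficient and constant term are exactly the two auxiliary identities $\sum f_{\mu}=nf_{\nu}$ and $\sum f_{\mu}c(\mu/\nu)=0$; the lemma is the combination $1\cdot(\text{first})-2\cdot(\text{second})$. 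Your hook-length bookkeeping is right (adding a corner increments by one exactly the hooks of the boxes of $\nu$ sharing its row or column, and the new box has hook $1$), and the cancellation of $s_{\nu}(1^q)$ is legitimate since the two sides agree for infinitely many integers $q$. What your approach buys is brevity and a conceptual explanation — the lemma is visibly the branching rule plus the vanishing of the content-weighted branching sum — and it is closer in spirit to the reference \cite{K} that the paper cites as an alternative source; what the paper's induction buys is self-containedness, using only the elementary SYT recursion rather than the hook-content and hook length formulas.
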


\begin{proof} We prove the lemma by induction on $n$. If $n=2$, the statement of the lemma is true by inspection. (Actually, if $n=1$ the lemma is also true, assuming $f_{\emptyset}=1$.)\bigskip

Assume the statement is true for all partitions  of $n-1$. Now let $\n$ be a partition of $n$.  We need to show that \begin{equation}\label{statement}(n+1)f_{\n}=\sum_{\stackrel{\m \vdash n+1}{\n\subseteq \m}}f_{\m}(1-2c(\m/\n)).\end{equation}

Consider first the left hand side of (\ref{statement}). Using (\ref{remove}),  we have \begin{equation} \label{removelhs} (n+1)f_{\n}=f_{\n}+nf_{\n}=f_n+\sum_{\stackrel{\eta \vdash n-1}{\eta\subseteq \n}}nf_{\eta}.\end{equation} By the inductive hypothesis,  \begin{equation}\label{addlhs}(n+1)f_{\n}=f_{\n}+\sum_{\stackrel{\eta \vdash n-1}{\eta\subseteq \n}}\left(\sum_{\stackrel{\m \vdash n}{\eta\subseteq \m}} f_{\m}(1-2c(\m/\eta))\right).\end{equation}

Note that   in (\ref{removelhs}) we remove a square from the shape $\n$ whenever possible and in (\ref{addlhs}) we add a square to the obtained shape whenever possible. There are two possibilities: \bigskip

\noindent (i) The added square is precisely the removed square. Then, $f_{\m}=f_{\n}$. 

\noindent (ii) The added square is different from the removed square. In this case, the operations of removing and adding squares commute. \bigskip

We separate these possibilities in the sum above. Thus, $$(n+1)f_{\n}=f_{\n}+f_{\n}\sum_{\stackrel{i=1}{\n_i>\n_{i+1}}}^{\ell(\n)} (1-2(\n_i-i))+
\sum_{\stackrel{\eta \vdash n-1}{\eta\subseteq \n}}\left(\sum_{\stackrel{\m \vdash n}{\stackrel{\eta\subseteq \m}{\mu \neq \n}}} f_{\m}(1-2c(\m/\eta))\right).$$ Using the commutativity of the operations of removal and addition of a square in case (ii) above, we have \begin{equation} \label{lhs}(n+1)f_{\n}=f_{\n}+f_{\n}\sum_{\stackrel{i=1}{\n_i>\n_{i+1}}}^{\ell(\n)} (1-2(\n_i-i))+
\sum_{\stackrel{\m \vdash n+1}{\n\subseteq \m}}\left(\sum_{\stackrel{\eta \vdash n}{\stackrel{\eta\subseteq \m}{\eta \neq \n}}} f_{\eta}(1-2c(\m/\n))\right).\end{equation} \medskip

Now we consider the right hand side of (\ref{statement}). 

Using (\ref{remove}), we have
$$\sum_{\stackrel{\m \vdash n+1}{\n\subseteq \m}}f_{\m}(1-2c(\m/\n))= \sum_{\stackrel{\m \vdash n+1}{\n\subseteq \m}}\left((1-2c(\m/\n))\sum_{\stackrel{\eta\vdash n}{\eta \subseteq \m}}f_{\eta}  \right).$$   Separating the possibilities (i) and (ii), the right hand side equals
\begin{equation}\label{rhs}f_{\n}(1-2\n_1) +f_{\n}\sum_{\stackrel{i=1}{\n_i>\n_{i+1}}}^{\ell(\n)}(1-2(\n_{i+1}-i))+\sum_{\stackrel{\m \vdash n+1}{\n\subseteq \m}}\left((1-2c(\m/\n))\sum_{\stackrel{\eta\vdash n}{\stackrel{\eta \subseteq \m}{\eta \neq \n}}}f_{\eta}  \right).\end{equation}\bigskip

To show that  (\ref{lhs}) and (\ref{rhs}) are equal, we need to show that \begin{equation}\label{nof}1+\sum_{\stackrel{i=1}{\n_i>\n_{i+1}}}^{\ell(\n)} (2i+1-2\n_i))=1-2\n_1+\sum_{\stackrel{i=1}{\n_{i}>\n_{i+1}}}^{\ell(\n)}(2i-1-2\n_i),\end{equation} which is true since 

$$\sum_{\stackrel{i=1}{\n_{i-1}>\n_i}}^{\ell(\n)}(\n_i-\n_{i+1})=\n_1.$$ This concludes the proof of the lemma

\end{proof}

\noindent Lemma \ref{conj2-lemma} also follows form results in \cite{K}.

\begin{theorem} \label{lefttetris} Let $\l\vdash n(n-1)$ with $\ell(\l)=n-1$ and $\l_{n-1}\geq n-1$ and let $\m \vdash n(n+1)$ be given by $\m=(\l_1+1, \l_2+1,\ldots, \l_{n-1}+1,n,1)$. Then,
\begin{equation}\langle a_{\d_{n+1}}^{2}, s_{\m}\rangle= (-1)^n3n\langle a_{\d_n}^{2}, s_{\l}\rangle.\end{equation}	\end{theorem}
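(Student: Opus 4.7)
The plan is to separate off the variable $x_{n+1}$, reduce the statement to an identity in $n$ variables involving $e_{n-1}$, and then evaluate both sides using Exercise~$7.37$(d) of \cite{S} together with Lemma \ref{conj2-lemma}. Starting from
$$a_{\d_{n+1}}^{3} \;=\; a_{\d_n}^{3}\cdot\prod_{i=1}^{n}(x_i-x_{n+1})^{3},$$
and using that the exponent of $x_{n+1}$ in $x^{\m+\d_{n+1}}$ is $1$, only the linear-in-$x_{n+1}$ part of the second factor contributes; a short calculation shows that this part equals $-3\,(x_1\cdots x_n)^{2}\,e_{n-1}(x_1,\ldots,x_n)$. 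Cancelling $(x_1\cdots x_n)^{2}$ against the first $n$ coordinates of $x^{\m+\d_{n+1}}$ reduces the theorem to the identity
\begin{equation}\label{keyid}
[x^{\l+\d_n+(n-1)e_n}]\,a_{\d_n}^{3}\,e_{n-1}(x_1,\ldots,x_n) \;=\; (-1)^{n-1}\,n\,\langle a_{\d_n}^{2},s_{\l}\rangle.
\end{equation}

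Expanding $e_{n-1}=\sum_{k=1}^{n}\prod_{i\neq k}x_i$ rewrites the left-hand side of \eqref{keyid} as $\sum_{k=1}^{n}[x^{\a^{(k)}}]a_{\d_n}^{3}$ for explicit weak compositions $\a^{(k)}$. Using the hypothesis $\l_{n-1}\geq n-1$, one checks that $\a^{(k)}$ has a repeated entry (so $[x^{\a^{(k)}}]a_{\d_n}^{3}=0$ by antisymmetry) unless $k$ is an addable corner of the partition $\l':=(\l_1-1,\ldots,\l_{n-1}-1,n-2)$, in which case $\a^{(k)}$ is strictly decreasing and $\a^{(k)}-\d_n$ equals the partition $\zeta^{(k)}:=\l'+e_k$. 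Thus \eqref{keyid} reduces to $\sum_{k}\langle a_{\d_n}^{2},s_{\zeta^{(k)}}\rangle=(-1)^{n-1}n\langle a_{\d_n}^{2},s_{\l}\rangle$, with the sum running over the addable corners $k$.

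The crucial observation is that each $\zeta^{(k)}=\eta^{(k)}+\langle(n-2)^{n}\rangle$ with $\eta^{(k)}:=\eta'+e_k\vdash n$, where $\eta':=(\l_1-n+1,\ldots,\l_{n-1}-n+1,0)\vdash n-1$, and that the addable corners of $\eta'$ match the valid $k$ exactly. Applying Exercise~$7.37$(d) of \cite{S} in $n$ variables to each $\zeta^{(k)}$, separating the contribution of the newly added box of $\eta^{(k)}/\eta'$ in $\prod_{s\in\eta^{(k)}}(1-2c(s))$, and using Lemma \ref{conj2-lemma} with $\n=\eta'$ to evaluate the sum over $k$ will give
$$\sum_k \langle a_{\d_n}^{2},s_{\zeta^{(k)}}\rangle \;=\; (-1)^{\binom{n}{2}}\,n\,f_{\eta'}\prod_{s\in\eta'}(1-2c(s)).$$

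To rewrite $\langle a_{\d_n}^{2},s_{\l}\rangle$ in the same form, I will use that $\l_i\geq n-1\geq 2$ for every $i\leq n-1$, so Corollary \ref{leftblock} applied in reverse gives $\langle a_{\d_n}^{2},s_{\l}\rangle=\langle a_{\d_{n-1}}^{2},s_{\tilde{\l}}\rangle$ with $\tilde{\l}:=(\l_1-2,\ldots,\l_{n-1}-2)$. Since $\tilde{\l}=\eta'+\langle(n-3)^{n-1}\rangle$, Exercise~$7.37$(d) of \cite{S} applied in $n-1$ variables yields $\langle a_{\d_n}^{2},s_{\l}\rangle=(-1)^{\binom{n-1}{2}}f_{\eta'}\prod_{s\in\eta'}(1-2c(s))$; comparing the two displayed identities and using $\binom{n}{2}-\binom{n-1}{2}=n-1$ establishes \eqref{keyid}, and multiplying by $-3$ completes the proof. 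The hardest step will be the combinatorial bookkeeping identifying the valid $k$ (where $\a^{(k)}$ is strictly decreasing) with the addable corners of $\eta'$ indexing Lemma \ref{conj2-lemma}, since the whole argument collapses without this precise bijection. The boundary case $n=2$, where the $n-1=1$ variable application of Exercise~$7.37$(d) is degenerate, has to be checked directly: here $\l=(2)$, $\m=(3,2,1)$, and one computes $\langle a_{\d_2}^{2},s_{(2)}\rangle=1$ and $\langle a_{\d_3}^{2},s_{(3,2,1)}\rangle=6=(-1)^{2}\cdot 3\cdot 2\cdot 1$.
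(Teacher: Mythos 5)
Your proposal is correct and follows essentially the same route as the paper: the same factorization $a_{\d_{n+1}}^{3}=a_{\d_n}^{3}\prod_{i=1}^{n}(x_i-x_{n+1})^{3}$, the same identification of the surviving terms with addable corners of $\eta'$, and the same final appeal to Exercise $7.37$(d), Corollary \ref{leftblock}, and Lemma \ref{conj2-lemma}. The only (minor, and pleasant) difference is that your uniform $e_{n-1}$ packaging absorbs the case $\l_{n-1}\geq n$, which the paper handles separately via the box-complement lemma, into the same computation.
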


 Thus adding the tetris type shape\vspace{.2in}

 \begin{center}
\begin{picture}(105,39)(200,0)\put(123.7,40){\tiny \young(:::::::::\hfil)} 
\put(205.5,2){\line(0,0){38}}  \put(208,18){\vdots}\put(214.2,2){\line(0,0){38}} \put(207,16){{\LARGE $\biggl.\stackrel{\ }{\ } \biggr\}$}$n-1$}
\put(123.8,-7){\tiny \young(:::::::::\hfil)}\put(205.2,-16){\tiny \young(\hfil\hfil\hfil{\ $\cdots$\ \  }\hfil\hfil)} \put(205.2,-25){\tiny \young(\hfil)} \put(216,-33){{$\stackrel{\underbrace{\hspace{2.1cm}}}{n-1}$}}\end{picture} \end{center} \vspace{.5in}

\noindent  to the left of the diagram of $\l$ changes the coefficient by a multiple of $(-1)^n3n$. We denote this tetris type shape by $L_1(n)$. 
\begin{proof} 

\noindent \emph{Case I:} $\l_{n-1}\geq n$. Then $\l=\langle n^{n-1}\rangle$ and $\m=\langle (n+1)^{n-1},n,1\rangle$. Using Corollary \ref{7.37.c} with $i=1$, we have  $$\langle a_{\d_n}^{2}, s_{\l}\rangle=(-1)^{\binom{n-1}{2}}1\cdot 3 \cdot 5 \cdots (2n-3). $$

\noindent We have $$ \m^{bc}=\langle 2n-1, n, (n-1)^{n-1}\rangle = (n,1)+\langle (n-1)^{n+1}\rangle.$$\medskip

\noindent By (\ref{7.37.d}), $$\langle a_{\d_{n+1}}^{2}, s_{\m^{bc}}\rangle=(-1)^{\binom{n+1}{2}}f_{(n,1)} \prod_{s\in (n,1)}(1-2c(s)).$$\medskip

\noindent Since $f_{(n,1)}=n$ and, by Lemma \ref{bclemma}, $\ds \langle a_{\d_{n+1}}^{2}, s_{\m}\rangle=\langle a_{\d_{n+1}}^{2}, s_{\m^{bc}}\rangle$, we have $$\langle a_{\d_{n+1}}^{2}, s_{\m}\rangle=(-1)^{\binom{n+1}{2}}n(-1)^{n-1}3\cdot1\cdot 3 \cdot 5 \cdots (2n-3)=(-1)^n3n\langle a_{\d_n}^{2}, s_{\l}\rangle.$$

\noindent \emph{Case II:} $\l_{n-1}=n-1$. Thus, $\l=\langle (n-1)^{n-1} \rangle +\n$, where $\n=(\n_1, \n_2, \ldots, \n_{n-1})\vdash n-1$. The last part of $\n$ can only be $0$ or $1$. If $\n_{n-1}=1$, then we are in \emph{Case I}. Therefore, we assume $\n_{n-1}=0$. 

Using Corollary \ref{leftblock}, we have $$\langle a_{\d_n}^2, s_{\l}\rangle = \langle a_{\d_n}^2, s_{\l/\langle 2^{n-1}\rangle}\rangle,$$ where $\l/\langle 2^{n-1}\rangle=(\l_1-2, \l_2-2, \ldots, \l_{n-1}-2)$.  
 Since $\l/\langle 2^{n-1}\rangle=\langle (n-3)^{n-1}\rangle +\n$ is a partition of $(n-1)(n-2)$, we can use (\ref{7.37.d}) to obtain \begin{equation}\label{lambdaf}\langle a_{\d_n}^2, s_{\l}\rangle=(-1)^{\binom{n-1}{2}}f_{\n}\prod_{s \in \n}(1-2c(s)).\end{equation} 

Now let us consider the partition $\m=(n+\n_1, n+\n_2, \ldots, n+\n_{n-1},n,1)$. We have $$x^{\m+\d_{n+1}}=x_1^{2n+\n_1}x_2^{2n+\n_2-1}\cdots x_i^{2n-\n_i-i+1}\cdots x_{n-2}^{n+3+\n_{n-2}}x_{n-1}^{n+2+\n_{n-1}}x_n^{n+1}x_{n+1}.$$ We write $a_{\d_{n+1}}^3$ as $$a_{\d_{n+1}}^3=a_{\d_{n}}^3\prod_{i=1}^{n}(x_i-x_{n+1})^3.$$
For each $i=1,2,\ldots, n-1$, the product $\ds \prod_{i=1}^{n}(x_i-x_{n+1})^3$ contributes $$-3x_i^2x_{n+1}x_1^3x_2^3\cdots x_{i-1}^3x_{i+1}^3\cdots x_n^3$$ and  $a_{\d_{n}}^3$ contributes $$c_i\cdot x_1^{2n+\n_1-3}x_2^{2n+\n_2-4}\cdots x_{i-1}^{2n+\n_{i-1}-i-1}x_i^{2n+\n_i-i-1} x_{i+1}^{2n+\n_{i+1}-i-3}\cdots x_{n-1}^{n+\n_{n-1}-1}x_n^{n-2}$$ to $x^{\m+\d_{n+1}}$. \medskip

\noindent Note that each monomial in $a_{\d_n}^3$ has the property that no two variables have the same exponent. Thus, if for some $i \geq 2$ we have $\n_{i-1}=\n_i$, then $c_i=0$. (There is no contribution when $i=n$ because $\n_{n-1}=0$.)\medskip

For  each $i=1,2, \ldots, \ell(\n)+1$, such that $\n_{i-1}>\n_i$ (by convention, $\n_0>\n_1$), we have $c_i=\langle a_{\d_n}^2, s_{\eta^{(i)}}\rangle$, where $\ds \eta^{(i)}=( \eta^{(i)}_1, \eta^{(i)}_2, \ldots , \eta^{(i)}_n)$ has parts $\ds \eta^{(i)}_j=n-2+\n_j$ if $j \not = i, n$, $\eta^{(i)}_i=n-1+\n_i$, and $\eta^{(i)}_n=n-2$. Thus 
 \begin{equation}\eta^{(i)}=\langle (n-2)^n\rangle +\tilde{\n}^{(i)},\end{equation} where $\tilde{\n}^{(i)}$ is the partition of $n$ obtained from $\n$ by adding  a box at the end of the $i$th row, \emph{i.e.}, \begin{equation}\tilde{\n}^{(i)}=(\n_1, \n_2, \ldots, \n_{i-1}, \n_i+1, \n_{i+1}, \ldots, \n_{n-1}).\end{equation} To find $c_i$ we  use (\ref{7.37.d}). We have \begin{equation}\label{nuif}c_i=(-1)^{\binom{n}{2}}f_{\tilde{\n}^{(i)}}\prod_{s \in \tilde{\n}^{(i)}}(1-2c(s)).\end{equation} 
 
 We have \begin{equation}\prod_{s \in\tilde{\n}^{(i)}}(1-2c(s))=(1-2c(i,\n_i+1))\prod_{s \in \n}(1-2c(s)).\end{equation}

Thus, using (\ref{lambdaf}) and (\ref{nuif}), in order to prove the theorem, we need to show that \begin{equation}nf_{\n}=\sum_{\stackrel{i=1}{\n_{i-1}>\n_i}}^nf_{\tilde{\n}^{(i)}}(1-2c(i,\n_i+1)).\end{equation} Note that the terms for $i=1$ and $i=\ell(\n)+1$ are always included in the sum.

This is precisely the statement of  Lemma \ref{conj2-lemma}. 

\end{proof}
\section {Applications}

In  \cite[Section 6]{D}, Dunne provides (without proof) closed formulas for several specific Slater states. They correspond to close formulas for $\langle a_{\d_n}^2, s_{\l} \rangle$ for specific (very symmetric) partitions $\l$. 
In this section, we use the recursive rules of the previous sections to prove some of these formulas. We also use our rules to explain recursive patterns observed by Dunne in the same section. We adapt the notation to match that of our previous sections and paraphrase Dunne's physical\medskip \ explanations. 

Dunne starts by mentioning that $ \langle a_{\d_n}^{2},s_{\l}\rangle=1$ for the most uniformly distributed of the Staler states, \emph{i.e.}, the state corresponding to $\l = (2(n-1),2(n-2),\ldots,4,2,0)$. This is the result of Corollary \ref{unif}. Next, he gives the coefficient for the situation in which the angular momentum levels are most closely bunched, \emph{i.e.}, $\l=\langle (n-1)^n\rangle$. This is our formula (\ref{conjl}): $ \langle a_{\d_n}^{2},s_{\l}\rangle=(-1)^{\binom{n}{2}}1\cdot 3\cdot 5 \cdots (2n-1)$. Note that in each of these two cases $\l=\l^{bc}$.\medskip

The next case, $\l=\langle (n)^{n-1}, 0\rangle$,  is not invariant under taking the box complement. Here one electron is in the $0$ angular momentum state and the remaining $n-1$ electrons are bunched together. The coefficient is $ \langle a_{\d_n}^{2},s_{\l}\rangle=(-1)^{\binom{n-1}{2}}1\cdot 3\cdot 5 \cdots (2n-3)$, which is the result of Corollary \ref{7.37.c} with $i=1$. \medskip

The above cases have all been noted previously in the combinatorics literature (the first case in \cite{STW} and the last two as exercises in \cite{S}, for example). We mention them here for completion and to show how they fit in the framework of the recursion formulas. The interesting applications of our rules come in the next batch of Dunne's closed formulas.  

Starting with the maximally bunched state $\langle (n-1)^n\rangle$ and successively moving the extreme inner and outer electrons in and out (respectively) by one step, the formulas given by Dunne correspond to:  

\begin{equation}\label{moveone} \langle a_{\d_n}^{2},s_{\langle n, (n-1)^{n-2}, n-2 \rangle }\rangle=(-1)^{\binom{n}{2}+1}(n-1) \cdot 1\cdot 3\cdot 5 \cdots (2n-3)\end{equation}
\begin{equation}\label{movetwo} \langle a_{\d_n}^{2},s_{\langle n+1, (n-1)^{n-2}, n-3 \rangle }\rangle=(-1)^{\binom{n}{2}+1}n(n-1) \cdot 1\cdot 3\cdot 5 \cdots (2n-5)\end{equation}
$$\vdots $$
\begin{equation}\label{moveall} \langle a_{\d_n}^{2},s_{\langle 2(n-1), (n-1)^{n-2}, 0 \rangle }\rangle=(-1)^{\binom{n}{2}+1} \cdot 1\cdot 3\cdot 5 \cdots (2n-5)\end{equation}\bigskip

To prove (\ref{moveone}), notice that $\langle n, (n-1)^{n-2}, n-2 \rangle$ is obtained from $\langle (n-2)^{n-1}\rangle$ by adding to its top a tetris type shape $T_1(n-1,n-2)$. By Proposition \ref{conjn-1} we have $$\langle n, (n-1)^{n-2}, n-2 \rangle= (-1)^{n-2}(n-1) \langle (n-2)^{n-1}\rangle$$ and thus, by (\ref{conjl}), $$\langle n, (n-1)^{n-2}, n-2 \rangle= (-1)^{\binom{n-1}{2}+n-2}(n-1)  \cdot 1\cdot 3\cdot 5 \cdots (2n-3),$$ which is equivalent to (\ref{moveone}). \bigskip

To prove (\ref{moveall}), we use Theorem \ref{row} to obtain $$ \langle a_{\d_n}^{2},s_{\langle 2(n-1), (n-1)^{n-2}, 0 \rangle }\rangle= \langle a_{\d_n}^{2},s_{\langle (n-1)^{n-2} \rangle }\rangle.$$ Then, by  Corollary \ref{7.37.c} with $i=1$, we have
$$ \langle a_{\d_n}^{2},s_{\langle 2(n-1), (n-1)^{n-2}, 0 \rangle }\rangle=(-1)^{\binom{n-2}{2}}\cdot 1\cdot 3\cdot 5 \cdots (2n-5), $$which is equivalent to (\ref{moveall}). \bigskip

We can also prove the formula that would naturally come before (\ref{moveall}), \emph{i.e.}, the coefficient of $s_{\langle 2n-3, (n-1)^{n-2}, 1 \rangle }$ in the decomposition of $a_{\d_n}^2$. Notice that $\langle 2n-3, (n-1)^{n-2}, 1 \rangle$ is obtained from $\langle 2n-4, (n-2)^{n-3} \rangle$ by adding to the left a tetris type shape $L_1(n-1)$. By  Theorem \ref{lefttetris},  $$ \langle a_{\d_n}^{2},s_{\langle 2n-3, (n-1)^{n-2}, 1\rangle}\rangle= (-1)^{n-1}3(n-1)\langle a_{\d_n}^{2},s_{\langle 2n-4, (n-2)^{n-3}\rangle} \rangle.$$

\noindent Since $\ds \langle a_{\d_n}^{2},s_{\langle 2n-4, (n-2)^{n-3}\rangle} \rangle= \langle a_{\d_n}^{2},s_{\langle (n-2)^{n-3}\rangle} \rangle$ by Theorem \ref{row}, we can use  Corollary \ref{7.37.c} with $i=1$ to obtain $$ \langle a_{\d_n}^{2},s_{\langle 2n-3, (n-1)^{n-2}, 1\rangle}\rangle=(-1)^{\binom{n-3}{2}+n-1}3(n-1)\cdot 1\cdot 3\cdot 5 \cdots (2n-7).$$ Thus, the formula preceding (\ref{moveall}) should be 
$$ \langle a_{\d_n}^{2},s_{\langle 2n-3, (n-1)^{n-2}, 1\rangle}\rangle=(-1)^{\binom{n}{2}+1}3(n-1)\cdot 1\cdot 3\cdot 5 \cdots (2n-7).$$

The recursions established in this article do not help prove (\ref{movetwo}) and the rest of the formulas alluded to above. On the other hand, the existence of these formulas is encouraging evidence that  further recursions must exist (perhaps in the form of adding/removing "broken" tetris type shapes). \medskip

Dunne's next suggestion is to start with the maximally distributed state, corresponding to  $\l=(2(n-1), 2(n-2), \ldots, 4, 2, 0)$, and make local shifts of electrons between angular momentum levels. In terms of partitions and Young diagrams, this corresponds to removing the last box in the $j$th row of $\l$ above and adding it to the the end of the $(j+1)$st  row. He notes "the remarkable fact that such an operation always changes the coefficient by a factor of $-3$." He generalizes the observation to the situation when the last box in the $j$th row of $\l$ is removed and added to the end of the $(j+k)$th  row. We prove this formula in the following proposition.

\begin{prop} Fix an integer $j$ with $1\leq j \leq n-1$ and let $k$ be an integer such that $j+1\leq l\leq n$. If $\n=(\n_1, \n_2, \ldots, \n_n) \vdash n(n-1)$ is given by $\n_i=2(n-i)$ if $i \neq j, l$, and $\n_j=2(n-j)-1$,  $\n_l=2(n-l)+1$, then
\begin{equation}\langle a_{\d_n}^2, s_{\n} \rangle=(-1)^{l-j}\cdot 3 \cdot 2^{l-j-1}.
\end{equation}\end{prop}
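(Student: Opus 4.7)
The plan is strong induction on $n$. The base case $n = 2$ forces $(j, l) = (1, 2)$, so $\nu = (1, 1)$, and a one-line expansion of $(x_1 - x_2)^2$ pins the coefficient to $-3$, matching the formula.

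For the inductive step I split into three subcases by the positions of the modified rows. When $j \geq 2$, the first row of $\nu$ is still $2(n - 1)$, so Theorem \ref{row} applied in reverse peels off this row and reduces the problem to the proposition in $n - 1$ variables with indices $(j - 1, l - 1)$; since $l - j$ is invariant, the inductive hypothesis closes this case. When $j = 1$ and $l < n$, the last part of $\nu$ is zero, so I pass to the box-complement via Lemma \ref{bclemma} and observe that $\nu^{bc}$ is again of the proposition's form with parameters $(n + 1 - l,\, n)$; since $n + 1 - l \geq 2$, this lands back in the first subcase.

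The hard part is the remaining subcase $j = 1,\, l = n$, which is self-dual under box-complement and has $\nu_1 = 2n - 3 < 2(n - 1)$, so neither Theorem \ref{row} nor Lemma \ref{bclemma} reduces the number of variables. Here I plan to compute $[x^{\nu + \delta_n}] a_{\delta_n}^3$ directly: factor
\[
a_{\delta_n}^3 = a_{\delta_{n-1}}^3 \prod_{i=1}^{n-1}(x_i - x_n)^3
\]
and isolate $x_n^1$ from the product. Only the terms in which a single factor $(x_{i_0} - x_n)^3$ contributes $-3 x_{i_0}^2 x_n$, with the remaining factors contributing $x_i^3$, survive, and this expresses the coefficient as
\[
\langle a_{\delta_n}^2, s_\nu \rangle = -3 \sum_{i_0 = 1}^{n - 1} \langle a_{\delta_{n-1}}^2, s_{\mu^{(i_0)}} \rangle.
\]
I expect the exponent bookkeeping to identify $\mu^{(1)}$ as the uniform staircase $2 \delta_{n-1}$ (so Corollary \ref{unif} contributes $1$) and, for $2 \leq i_0 \leq n - 1$, to identify $\mu^{(i_0)}$ as $2\delta_{n-1}$ with one box moved from row $1$ to row $i_0$ --- precisely the proposition's own $\nu$ in $n - 1$ variables with indices $(1, i_0)$. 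The inductive hypothesis then supplies $\langle a_{\delta_{n-1}}^2, s_{\mu^{(i_0)}} \rangle = (-1)^{i_0 - 1} \cdot 3 \cdot 2^{i_0 - 2}$, and summing the resulting geometric series $\sum_{k = 0}^{n - 3} (-2)^k = (1 - (-2)^{n-2})/3$ should collapse the whole expression to $-3 \cdot (-2)^{n-2} = (-1)^{n - 1} \cdot 3 \cdot 2^{n - 2}$, i.e., the $l - j = n - 1$ instance of the formula.

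The main obstacle is exactly this final subcase: both of the easier recursions fail, and one has to match exponents carefully to recognise the residual shapes $\mu^{(i_0)}$ as instances of the very proposition being proved (so that the induction bites), and then verify that the geometric sum closes to the correct power of two and sign. Once the identification of the $\mu^{(i_0)}$ is in place, the other two subcases are routine consequences of the recursions from Sections 4 and 5.
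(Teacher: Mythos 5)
Your proof is correct, but it takes a genuinely different route from the paper's. The paper handles general $(j,l)$ directly, with no induction on $n$: it strips $j-1$ shapes $T_0(\cdot,0)$ off the top via Theorem \ref{row}, then $l-j-1$ shapes $T_1(\cdot,1)$ via Proposition \ref{conjcase1} (each a factor $-2$), then one shape $T_0(\cdot,1)$ via Corollary \ref{tetris0} with $m=1$ (a factor $-3$), landing on the uniform staircase of Corollary \ref{unif}; the product $(-2)^{l-j-1}\cdot(-3)$ is the stated value. Your induction on $n$ instead uses only Theorem \ref{row}, Lemma \ref{bclemma} and Corollary \ref{unif}. I checked the pieces: for $j=1$, $l<n$ the box-complement does have parameters $(n+1-l,\,n)$ and so falls into your first subcase; in the self-dual case $j=1$, $l=n$ the exponent of $x_n$ in $x^{\nu+\delta_n}$ is $1$, so exactly one factor of $\prod_{i=1}^{n-1}(x_i-x_n)^3$ contributes $-3x_{i_0}^2x_n$, the residual monomials do identify $\mu^{(1)}=2\delta_{n-1}$ and $\mu^{(i_0)}$ as the $(1,i_0)$ instance of the proposition in $n-1$ variables (all genuine partitions, so no sign corrections from (\ref{calpha}) are needed), and the sum collapses to $-3\,(-2)^{n-2}$ as claimed. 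What your route buys is independence from Proposition \ref{conjcase1} and Corollary \ref{tetris0}, hence from exercise 7.37(b) of \cite{S}; what it costs is the three-way case split and the $x_n$-extraction bookkeeping, which the paper absorbs into recursions already proved. One cosmetic point: by (\ref{schurinvand}) the base case is the coefficient of $x_1^2x_2$ in $a_{\delta_2}^3=(x_1-x_2)^3$, namely $-3$; your expansion of $(x_1-x_2)^2=s_{(2)}-3s_{(1,1)}$ gives the same answer but is a different (equally valid) computation.
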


\begin{proof}

Start with the Young diagram for $\n$,  remove the top $j-1$ rows, \emph{i.e.}, the tetris type shapes $T_0(n-1,0), T_0(n-2, 0), \ldots T_0(n-j+1,0)$. By Theorem \ref{row}, we have $$ \langle a_{\d_n}^2, s_{\n}\rangle = \langle a_{\d_{n-j+1}}^2, s_{(\n_j, \n_{j-1}, \ldots, \n_n)}\rangle.$$ Next,  remove the $l-j-1$ tetris type shapes $T_1(n-j,1), T_1(n-j-1,1)$, $ \ldots, T_1(n-l+2,1)$.  By Proposition \ref{conjcase1}, we have $$\langle a_{\d_{n-j+1}}^2, s_{(\n_j, \n_{j-1}, \ldots, \n_n)}\rangle=(-2)^{l-j-1}\langle  a^2_{\d_{n-l+2}}, s_{(\n_{l-1}-1, \n_l, \ldots, \n_n)}\rangle.$$ Notice that $\n_{l-1}-1=\n_l$. Remove a tetris type shape  $T_0(n-l+1,1)$. By Corollary \ref{tetris0}, we obtain $$\langle  a^2_{\d_{n-l+2}}, s_{(\n_{l-1}-1, \n_l, \ldots, \n_n)}\rangle=-3\langle  a^2_{\d_{n-l+1}}, s_{(\n_{l}-1, \n_{l+1}, \ldots, \n_n)}\rangle.$$ If $l=n$, $(\n_{l}-1, \n_{l+1}, \ldots, \n_n)$ is the empty partition. Otherwise, it is $(2(n-l), 2(n-l-1), \ldots, 4,2,0)$. In either case $$\langle  a^2_{\d_{n-l+1}}, s_{(\n_{l}-1, \n_{l+1}, \ldots, \n_n)}\rangle=1.$$ Combining these results completes the proof. 

\end{proof}

Finally, we use the recursions of this article to explain some recursive properties observed by Dunne.  If we write $\#(n)$ for the number of Schur functions appearing in the decomposition of $a_{\d_n}^2$, he notes that, with a consistent ordering of the coefficients (as in the tables at the end of \cite{D}), \vspace{.1in}

(i) "the first $\#(n-1)$ coefficients for $n$ particles coincide with  all the coefficients for $n-1$ particles;\vspace{.1in}

(ii)   the next $\#(n-2)$ coefficients of the $n$ particle problem are given by $-3$ times the $\#(n-2)$ coefficients of the $n-2$ particle problem; \vspace{.1in}

(iii) the next $\#(n-3)$ coefficients of the $n$ particle problem are given by $6$ times the $\#(n-3)$ coefficients of the $n-3$ particle problem; \vspace{.1in}

(iv) the next $\#(n-4)$ coefficients of the $n$ particle problem are given by $-12$ times the $\#(n-4)$ coefficients of the $n-4$ particle problem, etc." \vspace{.1in}

This can be explained as follows. \vspace{.1in}

(i) Start with a partition $\l$  corresponding to a  Schur function appearing in the decomposition of $a^2_{\d_{n-1}}$ and add to its left a tetris type $L(n-1)$ to obtain a partition $\m$. Then, by Corollary \ref{leftblock}, $\langle a_{\d_n}^2, s_{\m}\rangle = \langle a_{\d_{n-1}}^2, s_{\l}\rangle$. (This correspondence matches Dunne's ordering in the tables at the end of his article.)\vspace{.1in}

(ii) Start with a partition $\l$ corresponding to a Schur functions appearing in the decomposition of $a^2_{\d_{n-2}}$ and add to its top a tetris type shape $T_0(n-2,0)$ (a row of length $2n-4$) to obtain a partition $\m$ whose Shur function appears in the decomposition of $a^2_{\d_{n-1}}$ with   coefficient  $\langle a_{\d_{n-2}}^2,s_{\l}\rangle$ (by Theorem \ref{row}). Then, add to the top of $\m$ a tetris type shape $T_0(n-1,1)$ to obtain a partition $\n$  whose Shur function appears in the decomposition of $a^2_{\d_{n}}$ with   coefficient  $-3\langle a_{\d_{n-2}}^2,s_{\l}\rangle$ (Corollary \ref{tetris0}).\vspace{.1in}

(iii) Start with a partition $\l$ corresponding to a Schur functions appearing in the decomposition of $a^2_{\d_{n-3}}$ and follow the steps in (ii), \emph{i.e.}, add a tetris type shape $T_0(n-3,0)$ to the top of $\l$ to obtain $\m$, and a tetris type shape $T_0(n-2,1)$ to the top of $\m$ to obtain $\n$. The Schur function for $\n$ appears in the decomposition of $a^2_{\d_{n-1}}$. Now add to the top of $\n$ a tetris type shape $T_1(n-1,1)$ to obtain a partition $\eta$. By Proposition \ref{conjcase1} and (ii), we have   $\langle a_{\d_n}^2, s_{\eta}\rangle =-2 \langle a_{\d_{n-1}}^2, s_{\n}\rangle=6  \langle a_{\d_{n-3}}^2, s_{\l}\rangle$. \vspace{.1in}

(iv) Start with a partition $\l$ corresponding to a Schur functions appearing in the decomposition of $a^2_{\d_{n-4}}$ and follow the steps in (iii). Thus, $\eta$, which is a partition for the $n-1$ particle problem, is obtained from $\l$ by adding to its top, in order, $T_0(n-4,0)$, $T_0(n-3,1)$ and $T_1(n-2,1)$. Add to the top of $\eta$  another tetris type $T_1(n-1,1)$ to obtain a partition $\xi$. Then, by Proposition \ref{conjcase1} and (iii), we have $\langle a_{\d_n}^2, s_{\xi}\rangle =-12 \langle a_{\d_{n-4}}^2, s_{\l}\rangle$. \vspace{.1in}


\section{Concluding remarks} 

The recursive formulas of this article together with the box-complement lemma give $15$ of the $16$ coefficients in the $n=4$ problem in terms of the coefficients for $n=3$ and $48$ of the $59$ coefficients in the $n=5$ problem in terms of the coefficients for $n=4$. This is a considerable improvement to the recursive observation in \cite{D} through which $23$ of the $59$ coefficients in the $N=5$ problem are determined from the results for $n=2,3,4$. 

Maple calculations suggest that further recursive rules involving other tetris type shapes will likely require "broken" shapes. As Dunne suggests \cite{D} it is very likely that such  rules exits. 





\end{document}